\newtheorem*{thm}{Theorem}
\newtheorem{prop}{Proposition}
\newtheorem{defin}{Definition}
\newtheorem{quest}{Question}
\newcommand{\m}{\mathbf}
\title[Translation equation which arises from homothety]
{Multi-variable translation equation which arises from homothety}
\author[Giedrius Alkauskas]{Giedrius Alkauskas}
\begin{document}

\begin{abstract}In many regular cases, there exists a (properly defined) limit of iterations of a function in several real variables, and this limit satisfies the functional equation $(1-z)\phi(\m{x})=\phi(\phi(\m{x}z)(1-z)/z)$; here $z$ is a scalar and $\m{x}$ is a vector.
This is a special case of a well-known translation equation. In this paper we present a complete solution to this functional equation in case $\phi$ is a continuous function on a single point compactification of a $2-$dimensional real vector space. It appears that, up to conjugation by a homogeneous continuous function, there are exactly four solutions. Further, in a $1-$dimensional case we present a solution with no regularity assumptions on $\phi$.
\end{abstract}
\maketitle
\begin{center}
\rm Keywords: Translation equation, iteration theory, positive quadratic forms, continuous solutions.
\end{center}
\begin{center}
\rm Mathematics subject classification (2010): Primary 39B12, 26B40; Secondary 39B52, 26A18.
\end{center}
\section{Introduction and main result}
\footnotetext[1]{The author gratefully acknowledges support
from the Austrian Science Fund (FWF) under the project Nr. P20847-N18.}

The following problem was given in the problem section of the {\it American Mathematical Monthly}.\\

\noindent\textbf{Problem 11149. }\it Let $g(x)=\log(1+x)$. Fix $x>0$. Find
\begin{eqnarray}
f(x):=\lim_{n\rightarrow\infty}n\cdot\underbrace{g\circ g\circ\cdots\circ g}_{n}
\Big{(}\frac{x}{n}\Big{)}.
\label{iter}
\end{eqnarray}\rm
The answer is $f(x)=\frac{2x}{x+2}$, and the straightforward solution can be found in corresponding issue of the {\it Monthly}.\\

As an introduction, let us rework this iteration more carefully. We will see that the limit function, if it exists, satisfies a certain natural functional equation.
Suppose, the limit function $f(x)$ is well defined and is continuous. We set
\begin{eqnarray*}
L_{n}(x):=n\cdot\underbrace{g\circ\cdots\circ g}_{n}
\Big{(}\frac{x}{n}\Big{)}=f(x)+\epsilon_{n}(x).
\end{eqnarray*}

Now choose any positive integers $n'$ and $n''$, set $n'+n''=n$. To avoid complicated notation, we rather
choose to write $o(1)$ instead of $\epsilon_{n}(x)$, the implied case being $n\rightarrow\infty$.
Consider $L_{nk}(x)$ for an arbitrary $k\in\mathbb{N}$:
\begin{eqnarray*}
L_{nk}(x)&=&f(x)+o(1)=nk\cdot\underbrace{g\circ\cdots\circ g}_{nk}
\Big{(}\frac{x}{nk}\Big{)}\\
&=&nk\cdot\underbrace{g\circ\cdots\circ g}_{nk}
\Big{(}\frac{xn'}{n}\cdot\frac{1}{n'k}\Big{)}\\
&=&\frac{n}{n''}\cdot n''k\cdot\underbrace{g\circ\cdots\circ g}_{n''k}
\Big{(}\frac{1}{n'k}\cdot f\Big{(}\frac{xn'}{n}\Big{)}+\frac{o(1)}{k}\Big{)}\\
&=&\frac{n}{n''}\cdot n''k\cdot\underbrace{g\circ\cdots\circ g}_{n''k}
\Big{(}\frac{1}{n''k}\cdot\frac{n''}{n'}\cdot f\Big{(}\frac{xn'}{n}\Big{)}+\frac{o(1)}{k}\Big{)}\\
&=&\frac{n}{n''}\cdot f\Big{(}f\Big{(}\frac{xn'}{n}\Big{)}\cdot\frac{n''}{n'}+o(1)\Big{)}
+o(1).
\end{eqnarray*}
Now, take the limit $k\rightarrow\infty$ in the equality
\begin{eqnarray*}
\frac{n''}{n}\cdot f(x)+o(1)=
f\Big{(}f\Big{(}\frac{xn'}{n}\Big{)}\cdot\frac{n''}{n'}+o(1)\Big{)}+
o(1).
\end{eqnarray*}
Due to continuity of $f$,  we obtain
$f(x)n''/n=f\Big{(}f(xn'/n)n''/n'\Big{)}$. Denote $n'/n$ by
$z$. Thus, we have the functional equation
\begin{eqnarray}
(1-z)f(x)=f\Big{(}f(xz)\frac{1-z}{z}\Big{)}, \text{ for any }x>0, \quad0<z<1.\label{funkk}
\end{eqnarray}
This is valid for rational $z$, but due to continuity of $f$ we obtain this for all real
$z$ in the interval $(0,1)$. Proposition \ref{prop10} in the end of this paper shows that we do not need any regularity assumption on a function $f$. Provided that $f$ maps $(0,\infty)$ to $(0,\infty)$, all solutions are given by $f(x)=\frac{x}{Cx+1}$, $C\geq 0$.\\

The natural question arises: what if we start with a function in several real variables and perform the iterations (\ref{iter})?
For example, let us choose $g(x, y)=(x-\frac{1}{2}x^2+\frac{1}{2}y^2,y-xy)$. What is the limit
\begin{eqnarray*}
\phi(x,y):=\lim_{n\rightarrow\infty}n\cdot\underbrace{g\circ g\circ\cdots\circ g}_{n}
\Big{(}\frac{1}{n}\cdot(x,y)\Big{)}.
\end{eqnarray*}
It certainly exists for, say, $0<x<1$, $0<y<1$, and it is given by the example (\ref{pvz5}) below. Yet, we do not pose a question of convergence of these iterations. We are interested whether the limit function satisfies any natural functional equation. The direct inspection of the above derivation of the functional equation shows that everything carries out to a multi-dimensional case without alterations. Thus, the equation we get is
\begin{eqnarray}
(1-z)\phi(\m{x})=\phi\Big{(}\phi(\m{x}z)\frac{1-z}{z}\Big{)},\quad \m{x}\in\mathbb{R}^{k}.\label{funk}
\end{eqnarray}
(For a moment, we do not specify for which $z$ this should hold).\\

The equation (\ref{funk}) is a special case of the equation
\begin{eqnarray*}
F(F(\alpha,x),y) = F(\alpha, x\cdot y).
\end{eqnarray*}
Here the function $F$ takes its values in a certain set $\Gamma$ and is defined on
a subset of a Cartesian product $\Gamma\times G$, where G is a set with a binary operation, denoted by
$``\cdot"$. This equation is called {\it the translation equation}. The paper \cite{moszner1} gives the summary on the results on this equation that appeared before 1973, and the exposition is continued in \cite{moszner2}. In \cite{bj} the authors are interested in finding conditions under which any Carath\'{e}odory solution of the translation or even more general functional equations (for $G=[+,(0,\infty)]$, $\Gamma$ being a metric space)
is continuous without assuming the compactness of $\Gamma$. The paper \cite{bcj} continues these investigations. The author in \cite{berg} considers the translation equation
$F(F(z, s), t)=F(z,s+t)$ for $s,t\in\mathbb{R}^{m}$, $z\in\mathbb{R}^{n}$, subject to the initial condition $F(z,0)=z$, and finds the local structure of a solution, provided its first order partial derivatives are continuous. In \cite{mach} the authors investigate the stability of the translation equation in case $G$ is a monoid with a unit. In \cite{fr} the authors are concerned with a formal translation equation and cocycle equations in the ring of formal power series $\mathbb{C}[[x]]$. The paper \cite{barbara} deals with the iterable functions. A {\it continuous iteration semigroup} on a set $I$ is  a function $F:I\times(0,\infty)\mapsto I$ which is continuous with respect to each variable and which satisfies the translation equation. Then {\it iterable functions} are functions $f:I\mapsto I$ which are embeddable into a continuous iteration semigroup; that is, $F(x,1)=f(x)$. As we will see later, the solutions to (\ref{funk}) are thus iterable functions.  For more general discussion on translation equation we refer to expository articles \cite{moszner1,moszner2}.\\

In the current paper we are dealing with the special case of the translation equation, where $F$ takes the form
$F(\m{x},z)=\frac{1}{z}\phi(\m{x}z)$.\\

After having solved a $1$-dimensional case (see Proposition \ref{prop10}), we see that one family of the solutions of (\ref{funk}) (in $2-$dimensional case) is given by
\begin{eqnarray}
\phi(x,y)=\Big{(}\frac{x}{ax+1},\frac{y}{by+1}\Big{)},\quad a,b\text{ fixed.}
\label{pvz1}
\end{eqnarray}
(In all these examples we do not specify in which region the functional equation is valid. This will be clarified later).
Some other solutions are given by
\begin{eqnarray}
\phi(x,y)=\Big{(}\frac{x}{ax+by+1},\frac{y}{ax+
by+1}\Big{)}.
\label{pvz2}
\end{eqnarray}
Another example:
\begin{eqnarray}
\phi(x,y)=\Big{(}\frac{x}{(by+1)(ax+by+1)},
\frac{y}{by+1}\Big{)}.
\label{pvz3}
\end{eqnarray}
But there are other, much more interesting solutions! For example
\begin{eqnarray*}
\phi(x,y)=\Big{(}\frac{2x^2-8y^2+x}{4x-8y+1},\frac{x^2-4y^2+y}{4x-8y+
1}\Big{)}.
\end{eqnarray*}
A second one:
\begin{eqnarray}
\phi(x,y)=\Big{(}\frac{2x^2+2y^2+4x}{x^2+y^2+4x+4},\frac{4y}{x^2+y^2+
4x+4}\Big{)}.
\label{pvz5}
\end{eqnarray}
A third one:
\begin{eqnarray*}
\phi(x,y)=\Big{(}(x-y)^{2}+x,(x-y)^{2}+y\Big{)}.
\label{pvz8}
\end{eqnarray*}
We finish with two last examples. The first is $\phi(x,y)=(\phi_{1}(x,y),\phi_{2}(x,y))$, where
\begin{eqnarray}
\phi_{1}(x,y)=\frac{4(x^4+2y^2x^2+y^4+2yx^2)^2y^2x}{(x^6+3x^4y^2+3x^2y^4+y^6+4yx^4+4y^3x^2+4y^2x^2)^2};\nonumber\\
\phi_{2}(x,y)=\frac{8(x^4+2y^2x^2+y^4+2yx^2)x^2y^4}{(x^6+3x^4y^2+3x^2y^4+y^6+4yx^4+4y^3x^2+4y^2x^2)^2}.
\label{pvz6}
\end{eqnarray}
Let us make a convention that for real $x$, $x^{1/3}=\text{sgn}(x)|x|^{1/3}$. Then the last one is given by
\begin{eqnarray}
\phi_{1}(x,y)=\frac{x^3}{x^3+x^2+y^2}+\frac{xy^2}{(x^3+x^2+y^2)^{1/3}(y^3+x^2+y^2)^{2/3}};\nonumber\\
\phi_{2}(x,y)=\frac{y^3}{y^3+x^2+y^2}+\frac{x^2y}{(x^3+x^2+y^2)^{2/3}(y^3+x^2+y^2)^{1/3}}.
\label{pvz7}
\end{eqnarray}
\indent Here in the example (\ref{pvz5}) we see that our problem is of affine rather than of projective nature. Indeed, suppose
\begin{eqnarray*}
\phi(x,y)=\phi(x:y:z)=(2x^2+2y^2+4xz: 4yz:x^2+y^2+4xz+4z^2).
\end{eqnarray*}
The special point (where all coordinates vanish) is $(x:y:z)=(-2:0:1)$. If $(x:y:z)=(x_{0}:0:1)$ and $x_{0}\rightarrow-2$, then $\phi(x:y:z)\rightarrow(1:0:0)$.
On the other hand, if $(x:y:z)=(-2:y_{0}:1)$ and  $y_{0}\rightarrow0$, then $\phi(x:y:z)\rightarrow(0:1:0)$. Thus, $\phi$ does
not extend continuously to the projective plane $\mathbb{R}P^{2}$. Rather, it is a continuous function on the compactification
$\mathbb{R}^{2}\cup\{\infty\}\sim \mathbb{S}^{2}$.\\

Let $ k\in\mathbb{N}$, and $\mathbb{R}^{ k}\cup\{\infty\}=\widehat{\mathbb{R}^{ k}}\sim\mathbb{S}^{ k}$ be a single point compactification of $\mathbb{R}^{ k}$ (in the sequel, the symbol ``$\infty$" will denote this point, as well as a point at infinity of
$\mathbb{R}$. This should not cause a confusion). First, we need a definition.
\begin{defin}
We call a continuous bijection $\ell:\widehat{\mathbb{R}^{k}}\mapsto\widehat{\mathbb{R}^{k}}$  {\rm a homothetic function}, if for all
$\m{x}\in\mathbb{R}^{k}$, $z\in\mathbb{R}$ we have
\begin{eqnarray*}
\ell(z\m{x})=z\ell(\m{x}).
\end{eqnarray*}
\end{defin}

If $Q$ is a positive quadratic form and $Q(\m{a})=1$, let us define
\begin{eqnarray}
\phi_{\m{a},Q}(\m{x})=
\frac{\m{a}Q(\m{x})+\m{x}}{Q(\m{a}+\m{x})}.
\label{bas}
\end{eqnarray}
(Note that the numerator is a vector and the denominator is a scalar). These functions (with $Q(\m{a})=1$) will constitute basic solutions of the functional equation we are dealing with.\\

Further, if $L$ is a linear form and $L(\m{c})=\m{0}$ for a certain vector $\m{c}$, let us define
\begin{eqnarray*}
\phi_{\m{c},L}(\m{x})=\m{c}L^{2}(\m{x})+\m{x}.
\end{eqnarray*}
\textbf{Important remark. }Throughout the paper, we use $k$ to denote the dimension of the space $\mathbb{R}^{k}$. All our subsequent results are valid for all $k\geq1$, except that the affirmative resolution of Question \ref{que} is mathematically rigorous only for $k=1$ and $k=2$. Hence, our main result is valid only for $k=1$ and $k=2$ (fortunately, the latter case is still very non-trivial). Henceforth we assume $k=2$, but we rather choose to use the unspecified index $k$ to emphasize that all results (apart from the main Theorem) hold for $k>2$ as well.
\begin{thm}Let $k=2$. Suppose, a continuous function
$\phi:\widehat{\mathbb{R}^{ k}}\mapsto\widehat{\mathbb{R}^{ k}}$, $\phi(\infty)=\m{a}$,
satisfies the functional equation (\ref{funk}) for any $\m{x}\in\widehat{\mathbb{R}^{ k}}$, $z\in\widehat{\mathbb{R}}$.\\
\begin{itemize}
\item Assume $\m{a}\in\mathbb{R}^{ k}\setminus\{\m{0}\}$. Then there exists a positive definite quadratic form $Q$ for which $Q(\m{a})=1$, and a homothetic function $\ell$ for which
$\ell(\m{a})=\m{a}$, such that
\begin{eqnarray}
\phi(\m{x})=\ell^{-1}\circ\phi_{\m{a},Q}\circ\ell(\m{x}).
\label{sol}
\end{eqnarray}
\item Assume $\m{a}=\infty$. Then there exists a linear form $L$ and a vector $\m{c}$ for which $L(\m{c})=0$, and a homothetic function $\ell$, such that
\begin{eqnarray*}
\phi(\m{x})=\ell^{-1}\circ\phi_{\m{c},L}\circ\ell(\m{x}).
\end{eqnarray*}
\end{itemize}
\label{th1}
\end{thm}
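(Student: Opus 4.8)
The plan is to recast the functional equation as a one–parameter flow, read off the flow's qualitative type from the location of $\mathbf{a}=\phi(\infty)$, and then manufacture the homothety $\ell$ carrying the flow to one of the two explicit normal forms.

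\emph{From the equation to a self-similar flow.} Setting $\mathbf{x}=\mathbf{0}$ in (\ref{funk}) and writing $u=(1-z)/z$ gives $\phi\bigl(u\phi(\mathbf{0})\bigr)=\frac{u}{1+u}\phi(\mathbf{0})$ for all $u$; taking $u=0$ yields $\phi(\mathbf{0})=\mathbf{0}$. Next put
\begin{eqnarray*}
\Psi_t(\mathbf{x}):=\tfrac1t\,\phi(t\mathbf{x}),\qquad t\neq0 .
\end{eqnarray*}
A short computation, applying (\ref{funk}) with $\mathbf{x}$ replaced by $(s+t)\mathbf{x}$ and $z=t/(s+t)$, turns the equation into the additive translation law $\Psi_s\circ\Psi_t=\Psi_{s+t}$; letting $z$ range over all of $\widehat{\mathbb{R}}$ supplies this for every real $s,t$. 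Standard arguments then give $\Psi_0=\mathrm{id}$ and show that each $\Psi_t$, in particular $\phi=\Psi_1$, is a homeomorphism of $\widehat{\mathbb{R}^{k}}$, so $\{\Psi_t\}$ is a continuous flow fixing $\mathbf{0}$. The definition yields at once the self-similarity
\begin{eqnarray*}
D_\lambda\circ\Psi_t\circ D_\lambda^{-1}=\Psi_{t/\lambda},\qquad D_\lambda(\mathbf{x}):=\lambda\mathbf{x},
\end{eqnarray*}
together with $\Psi_t(\infty)=\mathbf{a}/t$. Since $\phi$ is injective and $\phi(\mathbf{0})=\mathbf{0}$, we must have $\mathbf{a}\neq\mathbf{0}$, so the two cases $\mathbf{a}\in\mathbb{R}^{k}\setminus\{\mathbf{0}\}$ and $\mathbf{a}=\infty$ of Theorem \ref{th1} are exhaustive.

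\emph{The two target models.} For the map (\ref{bas}) one computes $\Psi_t(\mathbf{x})=\bigl(\mathbf{x}+t\mathbf{a}\,Q(\mathbf{x})\bigr)/\bigl(1+2t\,B(\mathbf{a},\mathbf{x})+t^2Q(\mathbf{x})\bigr)$, with $B$ the polarization of $Q$; this is the one–parameter group of special conformal maps with parameter running along $\mathbb{R}\mathbf{a}$, it is conjugate by inversion to the translation flow $\mathbf{y}\mapsto\mathbf{y}+t\mathbf{a}$, and $\mathbf{0}$ is its only fixed point while the orbit of $\infty$ is the punctured line $\mathbb{R}\mathbf{a}$ — exactly the picture forced above when $\mathbf{a}$ is finite. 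For $\phi_{\mathbf{c},L}$ one gets the shear $\Psi_t(\mathbf{x})=\mathbf{x}+t\mathbf{c}\,L(\mathbf{x})^2$, which preserves $L$, fixes the whole line $\ker L$, and fixes $\infty$ — the picture when $\mathbf{a}=\infty$. Differentiating the self-similarity relation at $t=0$ shows that any (sufficiently regular) generator $V$ of our flow satisfies $V(\lambda\mathbf{x})=\lambda^2V(\mathbf{x})$, i.e. is homogeneous of degree two, which is precisely why these two families exhaust the candidates; positive definiteness of $Q$ is what forces the finite case to have no spurious fixed directions.

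\emph{Building the homothety, and the main difficulty.} A homothety is exactly a homeomorphism of $\widehat{\mathbb{R}^{k}}$ commuting with every dilation $D_\lambda$; equivalently it consists of a self-homeomorphism of the direction sphere $S^{k-1}$ together with a positive continuous radial rescaling along each ray, and it automatically fixes $\mathbf{0}$ and $\infty$. Consequently the identity $\phi=\ell^{-1}\circ\phi_{\mathbf{a},Q}\circ\ell$ forces $\ell(\mathbf{a})=\mathbf{a}$ (apply both sides to $\infty$), while $Q(\mathbf{a})=1$ and $L(\mathbf{c})=0$ are built into the models, so the side conditions are automatic once $\ell$ is found. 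To produce $\ell$ one transports the given self-similar flow to the model: first choose the direction homeomorphism so as to straighten the angular motion of the flow on $S^{k-1}$ to that of the model, then fix the radial profile by the remaining radial factor, thereby producing $Q$ (respectively $\mathbf{c},L$); because $\ell$ commutes with dilations, the transported flow remains self-similar, which is what makes the normalization consistent. The main obstacle lies exactly here: with only continuity we cannot differentiate to obtain the quadratic generator, and the upgrade from a mere topological conjugacy to a dilation-commuting (homothetic) one is the content of Question \ref{que}. When $k=2$ the direction sphere is the circle $S^1$, so this reduces to a one-dimensional problem governed by rotation-number and Poincar\'e-type classification of flows on $S^1$; it is precisely this one-dimensionality that renders the resolution rigorous for $k=1$ (giving Proposition \ref{prop10}) and $k=2$, but not in higher dimension.
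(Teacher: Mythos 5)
Your overall strategy coincides with the paper's: rewrite (\ref{funk}) as the translation law $\phi^{z_{1}}\circ\phi^{z_{2}}=\phi^{z_{1}+z_{2}}$ for $\phi^{z}(\m{x})=\frac{1}{z}\phi(\m{x}z)$, exploit the dilation-equivariance of this flow, and build a homothetic conjugacy onto a fixed normal form by transporting orbits; you also correctly isolate the crux (upgrading a topological conjugacy to a dilation-commuting one, i.e.\ Question \ref{que}) and correctly note why only $k\leq 2$ is rigorous. But at precisely that crux your argument has a genuine gap. The relation $D_{\lambda}\circ\Psi_{t}\circ D_{\lambda}^{-1}=\Psi_{t/\lambda}$ says that the \emph{orbit foliation} is dilation-invariant, not the time-$t$ maps themselves: $\Psi_{t}(\lambda\m{x})=\lambda\Psi_{\lambda t}(\m{x})$ points in the direction of $\Psi_{\lambda t}(\m{x})$, not of $\Psi_{t}(\m{x})$, so the flow does \emph{not} descend to a flow on the direction sphere $S^{k-1}$. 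Hence "straightening the angular motion of the flow on $S^{1}$" is not a meaningful operation, and the appeal to rotation numbers and Poincar\'e-type classification is unavailable (and would say nothing anyway, since the projected dynamics has the fixed direction $\overline{\m{a}}$). What actually makes $k=2$ work, and what is absent from your writeup, is the transversality statement of Proposition \ref{prop9}: if $\m{x}\notin\mathbb{R}\m{a}$ then $\m{x}$ and $\phi(\m{x})$ are not collinear. This is essentially a one-line consequence of (\ref{funk}) together with injectivity (the equation $\m{x}=\phi(\m{x})(1-z)$ forces $\phi(\m{x})\in\{\m{0},\infty\}$, hence $\m{x}\in\mathbb{R}\m{a}$), and combined with $\phi^{z}(\m{x})=(\m{a}+o(1))/z$ as $z\rightarrow\pm\infty$ it shows that every line through the origin other than $\mathbb{R}\m{a}$ meets each orbit in exactly one non-zero point, and is therefore the required homothety-invariant representation set. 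Without this fact your "direction homeomorphism" has no reason to exist.

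Second, the construction of $\ell$ is only gestured at, whereas it is the substance of the proof. The paper defines $\ell$ explicitly by $\ell\big(\frac{1}{z}\chi(\m{x}z)\big)=\frac{1}{z}\phi(\Delta(\m{x})z)$ for $\m{x}$ in the cross-section and $\Delta$ a homothetic homeomorphism between the two cross-sections, and must then verify well-definedness, continuity, bijectivity, homotheticity, and the conjugacy identity, each check resting on (\ref{fuo}) and the invariance $\mathscr{C}=z\mathscr{C}$. Your "radial profile" step also inverts the logic of the normal form: $Q$ is not produced by the conjugation. One fixes a single model (say $\phi_{1}$) in advance, conjugates the arbitrary solution onto it, and only afterwards moves the exceptional vector back to $\m{a}$ by a linear change of variables via $\ell^{-1}\circ\phi_{\m{a},Q}\circ\ell=\phi_{\ell^{-1}\m{a},Q\circ\ell}$, which is where $Q$ with $Q(\m{a})=1$ and the normalization $\ell(\m{a})=\m{a}$ come from. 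Your degree-two-generator heuristic is fine as motivation but, as you concede, cannot be used under a bare continuity hypothesis, so it cannot serve as the argument that the two families exhaust the candidates; in the paper that exhaustion is exactly the conjugation-onto-a-fixed-model step. As written, the proposal identifies the right skeleton but leaves the decisive step unproved and supported by an inapplicable classification theorem.
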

Suppose, the function $\phi$ satisfies (\ref{funk}). Substitution $z=1$ implies $\phi(\m{0})=\m{0}$. Suppose also, for a certain $\m{x}\neq\m{0}$, $\phi(\m{x})=\m{0}$. Then the functional equation gives (we see it after a substitution $\m{x}\rightarrow\m{x}/z$) that $\phi(\m{x}z)=\m{0}$, $z\in\widehat{\mathbb{R}}$. In particular, $\phi(\infty)=\m{0}$. Now in (\ref{funk}) take a limit, as $z\rightarrow\infty$. This shows that for every $\m{x}$ there exists $\lim_{z\rightarrow\infty}(1-z)\phi(\m{x})=\m{0}$. Of course, this can happen only iff $\phi(\m{x})\equiv\m{0}$. Thus, if this is not the case,
\begin{eqnarray}
\phi(\m{x})\neq\m{0}\text{ for }\m{x}\neq\m{0}.
\label{nulis}
\end{eqnarray}
Consequently, later we will show that our main Theorem can be formulated as follows.
\begin{thm}{(\it Alternative formulation.) } Let $k=2$, $\m{x}\in\widehat{\mathbb{R}^{ k}}$, $z\in\widehat{\mathbb{R}}$. All continuous in $\widehat{\mathbb{R}^{k}}$ solutions to (\ref{funk}) are given by:
\begin{itemize}
\item $\phi_{{\tt id}}(\m{x})=\m{x}$;
\item $\phi_{0}(\m{x})=\m{0}$;
\item (Case $\m{a}$ is finite) $\ell^{-1}\circ\phi_{1}\circ\ell(\m{x})$, where $\ell$ is a homothetic function, and the $j-$th coordinate of $\phi_{1}$ is given by
\begin{eqnarray}
(\phi_{1}(\m{x}))_{j}=\frac{\sum\limits_{i=1}^{k}x^{2}_{i}+k\cdot x_{j}}
{\sum\limits_{i=1}^{k}(x_{i}+1)^2},\quad j=1,\ldots,k;
\label{basic}
\end{eqnarray}
\item (Case $\m{a}=\infty$) $\ell^{-1}\circ\phi_{\infty}\circ\ell(\m{x})$, where $\ell$ is a homothetic function, $\m{d}=(d_{1},d_{2},\ldots,d_{k})$ is a fixed in advance non-zero vector such that $\sum_{i=1}^{k}d_{i}=0$, and the $j-$th coordinate of $\phi_{\infty}$ is given by
\begin{eqnarray*}
(\phi_{\infty}(\m{x}))_{j}=d_{j}\Big{(}\sum\limits_{i=1}^{k}x_{i}\Big{)}^{2}+x_{j},\quad j=1,\ldots,k.
\end{eqnarray*}
\end{itemize}
\end{thm}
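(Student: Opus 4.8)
The plan is to obtain the Alternative formulation as a repackaging of Theorem~\ref{th1} together with the observation recorded in the paragraph that follows it, reducing the two parametric families $\phi_{\m{a},Q}$ and $\phi_{\m{c},L}$ to the single normal forms $\phi_1$ and $\phi_\infty$ by a linear change of coordinates, and accounting separately for the degenerate values of $\m{a}=\phi(\infty)$. First I would split according to $\m{a}$. If $\m{a}=\m{0}$, the argument given just after Theorem~\ref{th1} (let $z\to\infty$ in (\ref{funk})) forces $\phi\equiv\m{0}$, i.e.\ the solution $\phi_0$. The cases $\m{a}\in\mathbb{R}^k\setminus\{\m{0}\}$ and $\m{a}=\infty$ are exactly the two bullets of Theorem~\ref{th1}, so it remains to normalize the corresponding basic solutions.

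The engine of the reduction is the behaviour of the basic solutions under conjugation by a linear bijection $T$, which is itself homothetic. Pulling the linear $T^{-1}$ through the scalar denominator gives
\begin{eqnarray*}
T^{-1}\circ\phi_{\m{a},Q}\circ T=\phi_{T^{-1}\m{a},\,Q\circ T},\qquad
T^{-1}\circ\phi_{\m{c},L}\circ T=\phi_{T^{-1}\m{c},\,L\circ T},
\end{eqnarray*}
and one checks $(Q\circ T)(T^{-1}\m{a})=Q(\m{a})=1$, so the right-hand sides are again admissible. Since a composition of homothetic functions is homothetic, any $T$ used to normalize can be absorbed into the outer homothety; this is precisely why the constraint $\ell(\m{a})=\m{a}$ of Theorem~\ref{th1} disappears in the reformulation.

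For the finite case I would first verify that (\ref{basic}) is $\phi_1=\phi_{\m{a}_0,Q_0}$ with $\m{a}_0=(1,\dots,1)$ and $Q_0(\m{x})=\frac1k\sum_i x_i^2$ (indeed $Q_0(\m{a}_0)=1$). Given an arbitrary positive definite $Q$ with $Q(\m{a})=1$, I would produce a linear $T$ with $Q\circ T=Q_0$ and $T^{-1}\m{a}=\m{a}_0$: diagonalize and rescale $Q$ to $Q_0$ by some $T_0$, observe that $T_0^{-1}\m{a}$ and $\m{a}_0$ both have Euclidean length $\sqrt{k}$ (since $Q_0(T_0^{-1}\m{a})=Q(\m{a})=1$), and correct $T_0$ by an orthogonal map carrying one onto the other, available by transitivity of $O(k)$ on the sphere. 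Conjugating by $T$ turns $\phi_{\m{a},Q}$ into $\phi_1$, giving $\ell^{-1}\circ\phi_1\circ\ell$.

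For $\m{a}=\infty$, if $\m{c}=\m{0}$ or $L=0$ then $\phi_{\m{c},L}$ is the identity, which is the separately listed $\phi_{\tt id}$. Otherwise both are non-zero, and I would find a linear $T$ with $T^{-1}\m{c}=\m{d}$ and $L\circ T=S$, where $\m{d}$ is the prescribed vector and $S(\m{x})=\sum_i x_i$ is the form defining $\phi_\infty$; here the compatibility $S(\m{d})=0\Leftrightarrow L(\m{c})=0$ is automatic. For $k=2$ this is immediate: $\m{c}$ spans $\ker L$ and $\m{d}$ spans $\ker S$, so one sets $T\m{d}=\m{c}$ and sends a further vector $\m{e}$ with $S(\m{e})\neq0$ to some $\m{f}\notin\ker L$ with $L(\m{f})=S(\m{e})$, which keeps $T$ invertible. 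Conjugation then carries $\phi_{\m{c},L}$ to $\phi_\infty$. I expect the main obstacle to be the simultaneous normalization in the finite case — the quadratic form and the vector $\m{a}$ must be adjusted at once — but transitivity of the orthogonal group on a fixed sphere resolves it; the infinite case is only the extension of a partially prescribed linear map, routine for $k=2$.
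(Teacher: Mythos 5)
Your proposal is correct and follows essentially the same route as the paper: the paper also derives the alternative formulation from Theorem \ref{th1} via the identities $\ell^{-1}\circ\phi_{\m{a},Q}\circ\ell=\phi_{\ell^{-1}\m{a},Q\circ\ell}$ and $\ell^{-1}\circ\phi_{\m{c},L}\circ\ell=\phi_{\ell^{-1}\m{c},L\circ\ell}$, factoring the homothety through a linear part that diagonalizes $Q$ (resp.\ normalizes $L$) and then using the stabilizer of the normalized form (the orthogonal group, resp.\ stochastic matrices, plus scalars) to move the exceptional vector into position. The only cosmetic difference is that you absorb the final scalar rescaling into the choice $Q_0=\frac1k\sum_i x_i^2$, whereas the paper normalizes to $\sum_i x_i^2$ and appends a separate scalar factor $\ell_3(\m{x})=c\m{x}$.
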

\indent{\it Remark 1. }In the sequel we will concentrate to the case $\m{a}$ is finite. The infinite case is analogous. Of course, the first case of the above alternative formulation is covered in the last if $\m{d}=\m{0}$. Though for the sake of lucidity it is better to separate it, since the above four cases present the complete list of solutions up to conjugation by a homothetic function.\\
\indent{\it Remark 2. }The function $\phi_{\m{a},Q}(\m{x})$, where $Q(\m{a})=1$, can be given the following expression.
Let $B(\m{x},\m{y})=Q(\m{x}+\m{y})-Q(\m{x})-Q(\m{y})$ be the associated
bilinear form. Then
\begin{eqnarray}
\phi_{\m{a},Q}(\m{x})=\Big{(}\m{a}Q(\m{x})+\m{x}\Big{)}\cdot\Big{(}Q(\m{x})\cdot Q(\m{a})+B(\m{x},\m{a})+1\Big{)}^{-1}.
\label{alter}
\end{eqnarray}
This expression is less elegant though more convenient, since it is merely required that $Q(\m{a})\neq 0$.
The last expression for the function $\phi_{\m{a},Q}(\m{x})$ does not alter after a substitution $\m{a}\rightarrow c\m{a}$, $Q\rightarrow c^{-1}Q$, $c\in\mathbb{R}\setminus\{0\}$: $\phi_{\m{a},Q}=\phi_{c\m{a},c^{-1}Q}$. Thus, for a specific $c$ this expression reduces to the one given in Theorem.\\
\indent{\it Remark 3. }Let $ k=2$. Instead of looking for solutions in $\widehat{\mathbb{R}^{2}}\sim\mathbb{S}^2$, we can consider other spaces. Thus, for example, the solution (\ref{pvz1}) is a continuous function (and takes values) on $\mathbb{S}^{1}\times\mathbb{S}^{1}$, which is a torus rather than a sphere. On the other hand, the solution (\ref{pvz2}) is continuous (and takes values) on a projective plane $\mathbb{R}P^2$. Finally, the solution (\ref{pvz3}) is not even continuous on $\mathbb{S}^{2}$. These cases are not covered by the Theorem. Nevertheless all quadratic forms
$Q$ produce solutions, as given by (\ref{alter}). For example, the case (\ref{pvz1}) occurs when $Q(x,y)=xy$.
\section{The proof}
We will deal with the case $\m{a}$ is finite. For arbitrary $\phi$, let
\begin{eqnarray*}
\phi^{z}(\m{x})=\frac{1}{z}\cdot\phi(\m{x}z).
\end{eqnarray*}
Then the functional equation can be rewritten in the form
\begin{eqnarray}
\phi^{z_{1}}\circ\phi^{z_{2}}(\m{x})=\phi^{z_{1}+z_{2}}(\m{x}),\quad, z_{1},z_{2}\in\mathbb{R},\quad z_{1},z_{2},z_{1}+z_{2}\neq 0.\label{fuo}
\end{eqnarray}
We have:
\begin{eqnarray}
(\gamma\circ\chi)^{z}(\m{x})=\frac{1}{z}\cdot\gamma\Big{(}\chi(\m{x}z)\Big{)}=
\frac{1}{z}\cdot\gamma\Big{(}z\cdot\frac{1}{z}\cdot\chi(\m{x}z)\Big{)}=(\gamma^{z}\circ\chi^{z})(\m{x}).
\label{trans}
\end{eqnarray}
The following proposition is immediate.
\begin{prop}
If $\chi$ is a solution to (\ref{funk}), then so is $\chi^{z}$. In general, let $\ell$ be a homothetic function. Then $\chi^{\ell}:=\ell^{-1}\circ\chi\circ \ell$ is a solution to (\ref{funk}).
\label{prop1}
\end{prop}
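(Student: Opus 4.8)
The plan is to work entirely with the reformulated flow equation (\ref{fuo}), which the preceding discussion shows is equivalent to (\ref{funk}): a map $\phi$ solves (\ref{funk}) if and only if its associated family satisfies $\phi^{z_{1}}\circ\phi^{z_{2}}=\phi^{z_{1}+z_{2}}$. Both assertions then reduce to tracking how the operation $\phi\mapsto\phi^{z}$ interacts with composition and with homothetic maps, so no computation with the original quadratic-type expressions is needed.

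First I would record two elementary facts. First, the assignment $\phi\mapsto\phi^{z}$ respects composition: identity (\ref{trans}) gives $(\gamma\circ\chi)^{z}=\gamma^{z}\circ\chi^{z}$, and iterating it extends this to any finite number of factors. Second, every homothetic function $\ell$ is a fixed point of this operation, i.e. $\ell^{z}=\ell$ for all $z\neq0$; indeed $\ell^{z}(\m{x})=\frac{1}{z}\ell(z\m{x})=\frac{1}{z}\cdot z\,\ell(\m{x})=\ell(\m{x})$ directly from the defining homothety relation. I would also observe that $\ell^{-1}$ is again homothetic (setting $\m{y}=\ell(\m{x})$ in $\ell(z\m{x})=z\ell(\m{x})$ yields $\ell^{-1}(z\m{y})=z\ell^{-1}(\m{y})$), so that $(\ell^{-1})^{z}=\ell^{-1}$ as well.

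For the general statement, set $\psi=\ell^{-1}\circ\chi\circ\ell$. Applying the composition rule together with the two facts above gives $\psi^{z}=(\ell^{-1})^{z}\circ\chi^{z}\circ\ell^{z}=\ell^{-1}\circ\chi^{z}\circ\ell$. Composing two such expressions and cancelling the inner $\ell\circ\ell^{-1}=\mathrm{id}$ yields
$$\psi^{z_{1}}\circ\psi^{z_{2}}=\ell^{-1}\circ\chi^{z_{1}}\circ\chi^{z_{2}}\circ\ell=\ell^{-1}\circ\chi^{z_{1}+z_{2}}\circ\ell=\psi^{z_{1}+z_{2}},$$
where the middle equality is exactly the flow property (\ref{fuo}) for $\chi$. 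Hence $\psi$ satisfies (\ref{fuo}), and therefore solves (\ref{funk}).

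Finally, the first assertion is the special case of the second: scalar multiplication $\ell(\m{x})=z_{0}\m{x}$ by a fixed nonzero $z_{0}$ is a continuous homothetic bijection of $\widehat{\mathbb{R}^{k}}$ with inverse $\ell^{-1}(\m{x})=z_{0}^{-1}\m{x}$, and $\ell^{-1}\circ\chi\circ\ell(\m{x})=\frac{1}{z_{0}}\chi(z_{0}\m{x})=\chi^{z_{0}}(\m{x})$, which is precisely the $\chi^{z}$ of the statement. I do not expect a genuine obstacle, since the content is purely formal; the only point requiring care is the verification that $\ell^{-1}$ is homothetic and that the fixed-point identity $\ell^{z}=\ell$ holds, as these are exactly what let the $z$-scaling pass transparently through the conjugation.
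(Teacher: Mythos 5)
Your proof is correct and follows essentially the same route as the paper: both reduce the claim to the flow formulation (\ref{fuo}) and push the $z$-scaling through the conjugation using the homothety of $\ell$ and $\ell^{-1}$. The only difference is cosmetic --- you package the computation via the identity $(\gamma\circ\chi)^{z}=\gamma^{z}\circ\chi^{z}$ from (\ref{trans}) together with the observation $\ell^{z}=\ell$, whereas the paper writes out the same pointwise calculation explicitly.
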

\begin{proof}Of course, the first claim of the proposition is a special case of the second in case $\ell$ is given by a scalar matrix $z\cdot I$, where $I$ is the identity matrix. For $z_{1},z_{2},z_{1}+z_{2}\neq 0$, we thus have:
\begin{eqnarray*}
(\chi^{\ell})^{z_{1}}\circ(\chi^{\ell})^{z_{2}}(\m{x})&=&
\frac{1}{z_{1}}\cdot\ell^{-1}\circ\chi\circ\ell\Big{(}\frac{z_{1}}{z_{2}}\cdot\ell^{-1}\circ\chi\circ\ell(z_{2}\m{x})\Big{)}\\
&=&\ell^{-1}\Big{[}\frac{1}{z_{1}}\cdot\chi\Big{(}\frac{z_{1}}{z_{2}}\cdot\chi\circ\ell(z_{2}\m{x})\Big{)}\Big{]}=
\ell^{-1}\Big{[}\frac{1}{z_{1}+z_{2}}\cdot\chi\Big{(}\ell(\m{x})(z_{1}+z_{2})\Big{)}\Big{]}\\
&=&(\chi^{\ell})^{z_{1}+z_{2}}(\m{x}),
\end{eqnarray*}
and we are done.\end{proof}
{\it Examples. }Thus, let us put $k=2$, and consider (for $x\neq 0$, $y\neq 0$, $(x,y)\neq\infty$)
\begin{eqnarray*}
\ell(x,y)=\Big{(}\frac{x^{2}+y^{2}}{y},\frac{x^{2}+y^{2}}{x}\Big{)}.
\end{eqnarray*}
The inverse is given by (for $(x,y)\neq (0,0)$ or $\infty$)
\begin{eqnarray*}
\ell^{-1}(x,y)=\Big{(}\frac{x^{2}y}{x^{2}+y^{2}},\frac{xy^{2}}{x^{2}+y^{2}}\Big{)}.
\end{eqnarray*}
If $\phi$ is as in the example (\ref{pvz5}), then $\ell^{-1}\circ\phi\circ\ell$ is given by (\ref{pvz6}). Of course, in this case $\ell$ is not a bijection. The function $\phi$ thus constructed does indeed satisfy (\ref{funk}). The denominator of (\ref{pvz6}) vanishes only for $(x,y)=(0,0)$. Therefore, all the requirements of the Theorem are satisfied, only $\phi(x,y)$ fails to be continuous at $(x,y)=(0,0)$ and $(x,y)=\infty$.
As a matter of fact, all rational functions which satisfy the requirements of the Theorem are given by $\m{0}$, $\phi_{\m{a},Q}(\m{x})$ and $\phi_{\m{c},L}(\m{x})$, since all homothetic bi-rational maps $\widehat{\mathbb{R}^{k}}\mapsto\widehat{\mathbb{R}^{k}}$ are given by non-degenerate linear transformations.\\
\indent Consider another example which, too, fails only the continuity requirement. Let
\begin{eqnarray*}
\ell(x,y)=\Big{(}\frac{x^3}{x^2+y^2},\frac{y^3}{x^2+y^2}\Big{)}.
\end{eqnarray*}
This is a homothetic function. It maps the unit circle to the astroid $|x|^{2/3}+|y|^{2/3}=1$. If, as before, we make a convention that for real $x$, $x^{1/3}=\text{sgn}(x)|x|^{1/3}$, the inverse is given by
\begin{eqnarray*}
\ell^{-1}(x,y)=\Big{(}x+(xy^2)^{1/3},y+(x^2y)^{1/3}\Big{)}.
\end{eqnarray*}
Then, if $\phi(x,y)=(\frac{x}{x+1},\frac{y}{y+1})$, $\ell^{-1}\circ\phi\circ\ell$ is given by the example (\ref{pvz7}). Of course, nor the
solution (\ref{pvz7}), neither $\phi(x,y)$ is continuous on the whole sphere $\widehat{\mathbb{R}^{2}}$.\\

 We directly verify that if $\ell$ is a non-degenerate linear transformation of $\mathbb{R}^{k}$, then
\begin{eqnarray*}
\ell^{-1}\circ\phi_{\m{a},Q}\circ\ell=\phi_{\ell^{-1}\m{a},Q\circ\ell}.
\end{eqnarray*}
The last identity shows that the Theorem can be given an alternative formulation, as presented above. In fact, let the solution of (\ref{funk}) be given by (\ref{sol}). Express $\ell=\ell_{1}\circ\ell_{2}$, where $\ell_{1}$ is a non-degenerate linear transformation. For a suitable $\ell_{1}$, $Q\circ\ell_{1}$ is a diagonal quadratic form given by the identity matrix. Further, the set of matrices which stabilize the diagonal quadratic form
$\sum_{i=1}^{k}x^{2}_{i}$ is the orthogonal group $O_{k}(\mathbb{R})$. With its help we can achieve that $\ell_{1}^{-1}\m{a}$ is a positive scalar multiple of any given non-zero vector. In particular, the multiple of the vector $\m{e}=(1,1,\ldots,1)$. Eventually, we write $\ell_{2}=\ell_{3}\circ\ell_{4}$, where $\ell_{3}(\m{x})=c\m{x}$ for a certain constant $c\in\mathbb{R}$. Thus, this shows that the Theorem can be given an alternative formulation.\\
\indent Similarly, consider the case $\m{a}=\infty$. Then, if $\ell$ is a non-degenerate linear transformation,
\begin{eqnarray*}
\ell^{-1}\circ\phi_{\m{c},L}\circ\ell=\phi_{\ell^{-1}\m{c},L\circ\ell}.
\end{eqnarray*}
Thus, if $L$ is a non-zero linear form, we can achieve that $L\circ\ell$ is a
 linear form $\sum_{i=1}^{k}x_{i}$. The set of matrices which stabilizes this form is a group $S_{k}(\mathbb{R})$ of stochastic matrices (that is, whose columns sum up to $1$). Thus, as before, using the scalar matrices as well, we can achieve that $\ell_{1}^{-1}\m{c}$ is any non-zero vector from the hyper-plane $\sum_{i=1}^{k}x_{i}=0$.\\

We now proceed with the verification that $\phi_{\m{a},Q}(\m{x})$ does indeed satisfy the functional equation (\ref{fuo}). This can be done directly; we will argue in a different way, which reveals the structure of this function far better. First, note that using form (\ref{alter}), we have $\phi_{\m{a},Q}(\m{x}z)=z\phi_{\m{a}z,Q}(\m{x})$.
This implies
\begin{eqnarray}
\phi^{z}_{\m{a},Q}(\m{x})=\phi_{\m{a}z,Q}(\m{x}).
\label{prop3}
\end{eqnarray}
\begin{prop} We have:
\begin{eqnarray}
\phi_{\m{a},Q}\circ\phi_{\m{b},Q}(\m{x})=\phi_{\m{a}+\m{b},Q}(\m{x}).
\label{prop2}
\end{eqnarray}
\end{prop}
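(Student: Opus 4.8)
The plan is to avoid the brute-force substitution and instead exhibit a change of coordinates that conjugates every $\phi_{\m{a},Q}$ to an ordinary translation, so that the composition law collapses to the triviality $\m{a}+\m{b}$. The natural candidate is the inversion in the quadric $Q$, namely $T(\m{x})=\m{x}/Q(\m{x})$, defined wherever $Q(\m{x})\neq 0$ (with $T(\m{0})=\infty$ and $T(\infty)=\m{0}$ when $Q$ is positive definite). First I would record two elementary facts about $T$ that follow from $Q(c\m{x})=c^{2}Q(\m{x})$: one computes $Q(T(\m{x}))=1/Q(\m{x})$, and consequently $T(T(\m{x}))=(\m{x}/Q(\m{x}))/(1/Q(\m{x}))=\m{x}$, so $T$ is an involution.

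The heart of the argument is the conjugation identity
\[
\phi_{\m{a},Q}(\m{x})=T(\m{a}+T(\m{x})),
\]
equivalently $T\circ\phi_{\m{a},Q}=\tau_{\m{a}}\circ T$ with $\tau_{\m{a}}(\m{v})=\m{v}+\m{a}$. I would prove this by a short direct computation. Set $\m{w}=\m{a}+\m{x}/Q(\m{x})=(\m{a}Q(\m{x})+\m{x})/Q(\m{x})$ and expand $Q(\m{w})$ using the associated bilinear form $B$, obtaining $Q(\m{w})=(Q(\m{a})Q(\m{x})+B(\m{a},\m{x})+1)/Q(\m{x})$; this numerator is exactly the denominator appearing in the alternative expression (\ref{alter}). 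Then $T(\m{w})=\m{w}/Q(\m{w})$ simplifies, the factors of $Q(\m{x})$ cancel, and one is left with $(\m{a}Q(\m{x})+\m{x})/(Q(\m{a})Q(\m{x})+B(\m{a},\m{x})+1)=\phi_{\m{a},Q}(\m{x})$, as claimed.

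Granting the conjugation identity, the proposition is immediate. Since $T$ is an involution and translations add, $\tau_{\m{a}}\circ\tau_{\m{b}}=\tau_{\m{a}+\m{b}}$, we compute
\[
\phi_{\m{a},Q}\circ\phi_{\m{b},Q}=(T\circ\tau_{\m{a}}\circ T)\circ(T\circ\tau_{\m{b}}\circ T)=T\circ\tau_{\m{a}}\circ\tau_{\m{b}}\circ T=T\circ\tau_{\m{a}+\m{b}}\circ T=\phi_{\m{a}+\m{b},Q},
\]
where the inner pair $T\circ T$ cancels. This also reveals the structure promised in the text: together with (\ref{prop3}) it shows at once that $\phi_{\m{a},Q}$ satisfies the translation equation (\ref{fuo}), since there $\m{a}z_{1}$ and $\m{a}z_{2}$ sum to $\m{a}(z_{1}+z_{2})$.

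The only real care is with the exceptional locus. The map $T$ is merely a birational involution, undefined where $Q(\m{x})=0$ or where an intermediate denominator vanishes, so strictly speaking the computation above is an identity of rational functions valid on the dense open set where all denominators are nonzero; the proposition holds there verbatim. In the setting of the Theorem, where $Q$ is positive definite, $Q$ vanishes only at $\m{0}$ and $T$ is an honest homeomorphism of $\widehat{\mathbb{R}^{k}}$, so both sides are continuous and the identity extends to all of $\widehat{\mathbb{R}^{k}}$ by continuity. I expect this bookkeeping at the special points $\m{0}$ and $\infty$ (and, for indefinite $Q$, along the cone $Q=0$) to be the only subtlety; the algebraic content is entirely contained in the single identity $T\circ\phi_{\m{a},Q}=\tau_{\m{a}}\circ T$.
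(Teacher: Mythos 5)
Your proof is correct, and it takes a genuinely different route from the paper's. The paper proves the composition law by brute force: it sets $\m{y}=\phi_{\m{b},Q}(\m{x})$, substitutes into the form (\ref{alter}), and simplifies using the intermediate identity $Q(\m{b}Q(\m{x})+\m{x})=\mathcal{T}\cdot Q(\m{x})$ with $\mathcal{T}=Q(\m{x})Q(\m{b})+B(\m{x},\m{b})+1$, followed by a separate check of the exceptional case $\m{b}Q(\m{x})+\m{x}=\m{0}$ (where $\phi_{\m{b},Q}(\m{x})=\infty$). You instead exhibit the conjugation $\phi_{\m{a},Q}=T\circ\tau_{\m{a}}\circ T$ with $T(\m{x})=\m{x}/Q(\m{x})$ an involution, after which the addition law is formal. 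I checked your key computation: $Q(\m{a}+T(\m{x}))=(Q(\m{a})Q(\m{x})+B(\m{a},\m{x})+1)/Q(\m{x})$ does reproduce the denominator of (\ref{alter}), and the values at $\m{0}$, $\infty$, and the pole $\m{x}=-\m{b}/Q(\m{b})$ all match, so for positive definite $Q$ the case analysis the paper performs by hand is absorbed into the statement that $T$ is a homeomorphism of $\widehat{\mathbb{R}^{k}}$. Your approach buys more than the paper's: it explains \emph{why} the parameter group is additive, it yields (\ref{prop3}) as a one-line consequence of $T(z\m{x})=T(\m{x})/z$, and hence gives the translation equation (\ref{fuo}) with essentially no computation; the cost is the extra bookkeeping you correctly flag for indefinite $Q$ along the cone $Q=0$, where the identity holds only as an equality of rational maps, whereas the paper's direct algebra is indifferent to the signature. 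The one place to be slightly more careful is the phrase ``extends by continuity'': for indefinite $Q$ neither side need be continuous across the cone, so there the right statement is equality of rational functions agreeing on a Zariski-dense set, which you do also say.
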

\begin{proof}
Let $\m{x}\neq\m{0}$, $\m{b}Q(\m{x})+\m{x}\neq\m{0}$, and $Q(\m{x})\cdot Q(\m{b})+B(\m{x},\m{b})+1=\mathcal{T}$. Then the direct check shows that
\begin{eqnarray}
Q(\m{b}Q(\m{x})+\m{x})=\mathcal{T}\cdot Q(\m{x})\Rightarrow\mathcal{T}>0.\label{tarp}
\end{eqnarray}
Thus, let us define
\begin{eqnarray*}
\m{y}:=\phi_{\m{b},Q}(\m{x})=\Big{(}\m{b}Q(\m{x})+\m{x}\Big{)}\cdot\mathcal{T}^{-1}.
\end{eqnarray*}
After a straight substitution, and after multiplication of numerator and denominator by $\mathcal{T}$, we get
\begin{eqnarray*}
\phi_{\m{a},Q}(\m{y})&&=\Big{[}\m{b}Q(\m{x})+\m{x}+\m{a}Q
\Big{(}\m{b}Q(\m{x})+\m{x}\Big{)}\mathcal{T}^{-1}\Big{]}\\
&&\times\Big{[}Q(\m{b}Q(\m{x})+\m{x})\cdot Q(\m{a})\mathcal{T}^{-1}+B(\m{b}Q(\m{x})+\m{x},\m{a})
+\mathcal{T}\Big{]}^{-1}\\
&&\mathop{=}^{(\ref{tarp})}\Big{[}\m{b}Q(\m{x})+\m{x}+\m{a}Q(\m{x})\Big{]}\\
&&\times\Big{[}Q(\m{x})\cdot Q(\m{a})+B(\m{b}Q(\m{x})+\m{x},\m{a})
+\mathcal{T}\Big{]}^{-1}\\
&&=\Big{[}(\m{a}+\m{b})Q(\m{x})+\m{x}\Big{]}\\
&&\times\Big{[}Q(\m{x})\cdot Q(\m{a}+\m{b})+B(\m{x},\m{a}+\m{b})
+1\Big{]}^{-1}=\phi_{\m{a}+\m{b},Q}(\m{x}).
\end{eqnarray*}
Now, let $\m{x}\neq 0$, $\m{b}Q(\m{x})+\m{x}=\m{0}$. Without loss of generality, let $Q(\m{b})=1$. This implies $\m{x}=-\m{b}$, and
$\phi_{\m{b},Q}(-\m{b})=\infty$. Since $\phi_{\m{a},Q}(\infty)=\m{a}/Q(\m{a})$, we need to show that
\begin{eqnarray*}
\phi_{\m{a}+\m{b},Q}(-\m{b})=\frac{\m{a}}{Q(\m{a})},\quad\text{ if }Q(\m{b})=1.
\end{eqnarray*}
This is straightforward using (\ref{alter}).\end{proof}
Thus, we are ready to check that $\phi_{\m{a},Q}(\m{x})$ satisfies the functional equation (\ref{fuo}). Indeed,
\begin{eqnarray*}
\phi^{z_{1}}_{\m{a},Q}\circ\phi^{z_{2}}_{\m{a},Q}(\m{x})\mathop{=}^{(\ref{prop3})}\phi_{\m{a}z_{1},Q}\circ\phi_{\m{a}z_{2},Q}(\m{x})\mathop{=}^{(\ref{prop2})}
\phi_{\m{a}(z_{1}+z_{2}),Q}(\m{x})\mathop{=}^{(\ref{prop3})}\phi^{z_{1}+z_{2}}_{\m{a},Q}(\m{x}).
\end{eqnarray*}
Finally, we are left to verify the following
\begin{prop}
$\phi_{\m{a},Q}(\m{x})$ is a continuous function on $\widehat{\mathbb{R}^{ k}}$.
\end{prop}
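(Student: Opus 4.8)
The plan is to exploit the positive definiteness of $Q$ in three separate regimes. Recall that positive definiteness furnishes constants $0<c\le C$ with $c|\m{v}|^{2}\le Q(\m{v})\le C|\m{v}|^{2}$ for all $\m{v}$, and that the denominator in (\ref{bas}), namely $Q(\m{a}+\m{x})$, vanishes precisely at the single point $\m{x}=-\m{a}$. Away from this point the numerator $\m{a}Q(\m{x})+\m{x}$ (a vector of polynomials) and the scalar denominator are continuous with the denominator nonzero, so $\phi_{\m{a},Q}$ is continuous on $\mathbb{R}^{k}\setminus\{-\m{a}\}$ as a quotient. It therefore remains to verify continuity, as a map of $\widehat{\mathbb{R}^{k}}$, at the two exceptional points $\m{x}=-\m{a}$, where we set $\phi_{\m{a},Q}(-\m{a})=\infty$, and $\m{x}=\infty$, where $\phi_{\m{a},Q}(\infty)=\m{a}/Q(\m{a})=\m{a}$.

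For the point $\m{x}=-\m{a}$ I would substitute $\m{x}=-\m{a}+\m{h}$ and expand. Using $Q(-\m{a})=Q(\m{a})=1$ and the bilinearity of $B$, the denominator becomes $Q(\m{a}+\m{x})=Q(\m{h})$, while the numerator becomes $\m{h}-\m{a}B(\m{a},\m{h})+\m{a}Q(\m{h})$. The decisive observation is that the linear map $T(\m{h})=\m{h}-\m{a}B(\m{a},\m{h})$ is nonsingular: if $T(\m{h})=\m{0}$ then $\m{h}=\m{a}B(\m{a},\m{h})$ is a scalar multiple of $\m{a}$, and writing $\m{h}=s\m{a}$ together with $B(\m{a},\m{a})=2Q(\m{a})=2$ forces $s=2s$, hence $\m{h}=\m{0}$. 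Thus $|T(\m{h})|\ge c'|\m{h}|$ for some $c'>0$, so the numerator has norm at least $c'|\m{h}|-C|\m{h}|^{2}$ while the denominator is at most $C|\m{h}|^{2}$; consequently $|\phi_{\m{a},Q}(\m{x})|\to\infty$ as $\m{h}\to\m{0}$, which is exactly continuity at $-\m{a}$ with value $\infty$.

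For the point $\m{x}=\infty$ I would use the form (\ref{alter}) and divide numerator and denominator by $Q(\m{x})$, obtaining
\begin{eqnarray*}
\phi_{\m{a},Q}(\m{x})=\frac{\m{a}+\m{x}/Q(\m{x})}{1+B(\m{x},\m{a})/Q(\m{x})+1/Q(\m{x})}.
\end{eqnarray*}
Since $B(\,\cdot\,,\m{a})$ is a linear form, the estimates $|\m{x}|/Q(\m{x})=O(|\m{x}|^{-1})$, $|B(\m{x},\m{a})|/Q(\m{x})=O(|\m{x}|^{-1})$ and $1/Q(\m{x})=O(|\m{x}|^{-2})$ all tend to $0$ as $|\m{x}|\to\infty$, so $\phi_{\m{a},Q}(\m{x})\to\m{a}/1=\m{a}$, matching $\phi_{\m{a},Q}(\infty)=\m{a}$. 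This completes continuity on all of $\widehat{\mathbb{R}^{k}}$.

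The only genuinely delicate step is the behaviour at $\m{x}=-\m{a}$, where both numerator and denominator vanish: one must rule out the possibility that the quotient approaches a finite vector along some direction. The nondegeneracy of the linear part $T$ is precisely what prevents this, forcing a first-order numerator against a second-order denominator; and this nondegeneracy, like the localization of the denominator's zero set to a single point and the two-sided quadratic bounds invoked in every regime, rests on the positive definiteness of $Q$ together with the normalization $Q(\m{a})=1$.
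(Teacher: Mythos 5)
Your proof is correct, but at the one delicate point, $\m{x}=-\m{a}$, it takes a genuinely different route from the paper. The paper recycles a by-product of its verification of the composition law (the identity (\ref{tarp})), namely
\begin{eqnarray*}
Q\big(\phi_{\m{a},Q}(\m{x})\big)=\frac{Q(\m{x})}{Q(\m{x}+\m{a})},
\end{eqnarray*}
and then simply observes that as $\m{x}\rightarrow-\m{a}$ the right-hand side tends to $1/0=\infty$; positive definiteness of $Q$ converts ``$Q(\phi)\rightarrow\infty$'' into ``$\phi\rightarrow\infty$ in $\widehat{\mathbb{R}^{k}}$'' with no further work, and in particular no direction-by-direction analysis of the indeterminate quotient is ever needed. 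You instead expand directly at $-\m{a}$, identify the linear part $T(\m{h})=\m{h}-\m{a}B(\m{a},\m{h})$ of the numerator, and prove it nonsingular via $B(\m{a},\m{a})=2Q(\m{a})=2$; this is exactly the right lemma for your approach, and your verification of it is correct (as is the routine treatment of $\m{x}=\infty$, which the paper dismisses as immediate). What your argument buys is self-containedness and more local information — it exhibits the first-order numerator against the second-order denominator and makes explicit why no direction can yield a finite limit — at the cost of the extra nondegeneracy lemma and some norm estimates. What the paper's argument buys is brevity: the scalar invariant $Q\circ\phi$ collapses the whole multi-directional limit into a one-dimensional computation. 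Both proofs are complete.
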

\begin{proof}Assume that $Q(\m{a})=1$. The continuity is immediate in case $\m{x}=\infty$. Also, this is obvious if $\m{x}\neq-\m{a}$. Further, as noted above,
\begin{eqnarray*}
Q(\phi_{\m{a},Q}(\m{x}))=Q(\m{x})\cdot Q(\m{x}+\m{a})^{-1}.
\end{eqnarray*}
Thus, if $\m{x}\rightarrow-\m{a}$, the numerator of the above tends to $Q(-\m{a})=1$. The denominator tends to $0$. Thus,
\begin{eqnarray*}
Q(\phi_{\m{a},Q}(\m{x}))\mathop{\longrightarrow}_{\m{x}\rightarrow-\m{a}}\infty\Rightarrow \phi_{\m{a},Q}(\m{x})\mathop{\longrightarrow}_{\m{x}\rightarrow-\m{a}}\infty,
\end{eqnarray*}
and we are done.\end{proof}
\indent Let $\phi$ by any non-zero solution of (\ref{funk}), $\phi(\infty)=\m{a}$. In the functional equation (\ref{funk}) let us take the limit $\m{x}\rightarrow\infty$. Due to continuity, we obtain
\begin{eqnarray}
(1-z)\m{a}=\phi\Big{(}\frac{1-z}{z}\cdot\m{a}\Big{)}\Rightarrow\phi(\m{a}z)=\frac{z}{z+1}\cdot\m{a}\text{ for }z\in\widehat{\mathbb{R}}.
\label{aaa}
\end{eqnarray}
Let $\mathscr{A}={\rm Im}(\phi)$. From the very (\ref{funk}) we see that if $\m{x}\in\mathscr{A}$, $z\m{x}\in\mathscr{A}$.
Thus, if $\m{x}\notin\mathscr{A}$, $z\m{x}\notin\mathscr{A}$ for $z\neq 0$. Due to this single fact, and the facts (\ref{nulis}) and (\ref{aaa}) now it is easy to deduce that $\phi$ is surjective. Indeed, suppose the opposite. Let
$\mathscr{O}$ be the open ball whose intersection with $\mathscr{A}$ is empty. Then $\mathscr{A}$ does not intersect with $\mathscr{O}\times(0,\infty)\cup\mathscr{O}\times(-\infty,0)$. Now take the closed ball $\mathscr{B}$ around the origin of diameter $\|\m{a}\|/2$ (here we use the standard Euclidean norm). The segment $[-\m{a}/2,\m{a}/2]$ is mapped by $\phi$ bijectively onto $[-\m{a},\m{a}/3]$. The image of $\mathscr{B}$ under $\phi$ thus contains this segment and does not intersect two open cones $\mathscr{O}\times(0,\infty)$ and $\mathscr{O}\times(-\infty,0)$. But this is impossible due to the fact (\ref{nulis}) simply by a topological reason.\\

The functional equation (\ref{funk}) can be rewritten in the form
\begin{eqnarray}
\phi\Big{(}\phi(\m{x})z\Big{)}\frac{1}{z}=\phi\Big{(}\m{x}(z+1)\Big{)}\frac{1}{z+1}.
\label{it}
\end{eqnarray}
We see that $\lim_{z\rightarrow 0}\phi(\phi(\m{x})z)\frac{1}{z}$ exists and is equal to $\phi(\m{x})$. Since
${\rm Im}(\phi)=\widehat{\mathbb{R}^{ k}}$, this gives
\begin{eqnarray}
\lim\limits_{z\rightarrow 0}\frac{\phi(\m{x}z)}{z}=\m{x}\quad\text{ for }\m{x}\in\mathbb{R}^{k}.
\label{riba}
\end{eqnarray}
Suppose $\phi(\m{x})=\phi(\m{y})$. Then the functional equation (\ref{it}) shows that $\phi(\m{x}z)/z=\phi(\m{y}z)/z$. Taking the limit $z\rightarrow 0$ and using (\ref{riba}), we obtain $\m{x}=\m{y}$. Thus, we have proved the following
\begin{prop}
If a non-zero function $\phi$ satisfies the hypotheses of the Theorem, then $\phi$ is a bijection.
\end{prop}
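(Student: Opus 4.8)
The plan is to show that $\phi$ is both surjective and injective. Surjectivity has already been established in the discussion preceding this Proposition (the topological argument proving ${\rm Im}(\phi)=\widehat{\mathbb{R}^{k}}$), so the only remaining task is injectivity.

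For injectivity I would exploit the reformulated equation (\ref{it}) together with the limit relation (\ref{riba}). Suppose $\phi(\m{x})=\phi(\m{y})$, and first treat the case where both $\m{x},\m{y}\in\mathbb{R}^{k}$ are finite. Since the left-hand side of (\ref{it}) depends on its argument only through $\phi(\m{x})$, the two instances for $\m{x}$ and $\m{y}$ agree, whence the right-hand sides agree: $\phi(\m{x}(z+1))/(z+1)=\phi(\m{y}(z+1))/(z+1)$ for all admissible $z$. Writing $t=z+1$, this gives $\phi(\m{x}t)/t=\phi(\m{y}t)/t$ on a set of $t$ accumulating at $0$. Letting $t\to0$ and applying (\ref{riba}) to each side, the limits are $\m{x}$ and $\m{y}$ respectively, forcing $\m{x}=\m{y}$.

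The step I expect to be the main obstacle is the point at infinity, because (\ref{riba}) is available only for finite arguments. I would dispose of the mixed case $\m{x}\in\mathbb{R}^{k}$, $\m{y}=\infty$ (so that $\phi(\m{x})=\phi(\infty)=\m{a}$) by deriving a contradiction. Running (\ref{it}) for this $\m{x}$ and evaluating its left-hand side via (\ref{aaa}), namely $\phi(\m{a}z)/z=\m{a}/(z+1)$, yields $\phi(\m{x}t)=\m{a}$ for all $t$ near $0$. Hence $\phi(\m{x}t)/t=\m{a}/t$, whose norm tends to infinity as $t\to0$ because $\m{a}\neq\m{0}$ (a consequence of (\ref{nulis}), since $\infty\neq\m{0}$). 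This contradicts (\ref{riba}), which requires the limit to equal the finite vector $\m{x}$. Thus no finite point maps to $\m{a}$, ruling out the mixed case; the remaining possibility $\m{x}=\m{y}=\infty$ is trivial. Injectivity therefore holds on all of $\widehat{\mathbb{R}^{k}}$, and together with surjectivity this shows $\phi$ is a bijection.
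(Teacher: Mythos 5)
Your proof is correct and follows essentially the same route as the paper: surjectivity from the preceding topological argument, and injectivity by comparing the two sides of (\ref{it}) and letting the parameter tend to $0$ via (\ref{riba}). The only difference is that you explicitly treat the point at infinity (ruling out $\phi(\m{x})=\m{a}$ for finite $\m{x}$ using (\ref{aaa}) and (\ref{nulis})), a case the paper's one-line argument passes over silently; this is a welcome, correct refinement rather than a different method.
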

Also, (\ref{it}) by induction gives
\begin{eqnarray}
\frac{1}{n}\cdot\phi(n\m{x})=\underbrace{\phi\circ\cdots\circ\phi}\limits_{n}(\m{x})\text{ for }n\in\mathbb{N};\quad
-\phi(\m{x})=\phi^{-1}(-\m{x}).
\label{it2}
\end{eqnarray}
Here the sign $``-1"$ refers to the inverse function. Curiously, as we see, this exactly coincides with the notation $\phi^{z}(\m{x})$ for $z=-1$.\\
\indent The following proposition is not needed for the final proof of the Theorem; nevertheless, we include it for the better understanding of the functional equation (\ref{funk}).
\begin{prop}
Let $\gamma$ and $\chi$ be two continuous solutions to (\ref{funk}). If $\gamma$ and $\chi$ commute, then $\gamma\circ\chi$
is a solution to $(\ref{funk})$ as well.
\end{prop}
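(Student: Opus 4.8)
The plan is to work entirely with the reformulation (\ref{fuo}): a continuous map is a solution exactly when its associated one–parameter family $\{\phi^{z}\}$ forms an additive one–parameter group under composition. Writing $\psi=\gamma\circ\chi$, the distributivity identity (\ref{trans}) gives $\psi^{z}=\gamma^{z}\circ\chi^{z}$ for every $z$, whence
\[
\psi^{z_{1}}\circ\psi^{z_{2}}=\gamma^{z_{1}}\circ\chi^{z_{1}}\circ\gamma^{z_{2}}\circ\chi^{z_{2}},\qquad \psi^{z_{1}+z_{2}}=\gamma^{z_{1}}\circ\gamma^{z_{2}}\circ\chi^{z_{1}}\circ\chi^{z_{2}},
\]
the second equality using that $\gamma$ and $\chi$ are themselves solutions. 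Cancelling $\gamma^{z_{1}}$ on the left and $\chi^{z_{2}}$ on the right (everything in sight is a bijection), I see that $\psi=\gamma\circ\chi$ is a solution if and only if $\chi^{z_{1}}\circ\gamma^{z_{2}}=\gamma^{z_{2}}\circ\chi^{z_{1}}$ for all $z_{1},z_{2}$. Thus the proposition reduces to showing that the two one–parameter groups commute \emph{elementwise}. I also note that the hypothesis $\gamma\circ\chi=\chi\circ\gamma$ together with (\ref{trans}) immediately yields the \emph{diagonal} case $\gamma^{z}\circ\chi^{z}=\chi^{z}\circ\gamma^{z}$, but this alone is not enough.

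To reduce the number of free parameters I would use that $\phi^{z}(\m{x})=\frac1z\phi(z\m{x})$, so conjugation by the scaling $\m{x}\mapsto c\m{x}$ (a homothetic function, hence a solution–preserving operation by Proposition \ref{prop1}) sends $\gamma^{v}$ to $\gamma^{v/c}$ and $\chi^{w}$ to $\chi^{w/c}$ simultaneously. Consequently whether $\gamma^{z_{2}}$ and $\chi^{z_{1}}$ commute depends only on the ratio $z_{2}:z_{1}$, and after normalising it suffices to prove that $\gamma^{v}$ commutes with $\chi$ for every $v$. Let $H=\{v\in\mathbb{R}:\gamma^{v}\circ\chi=\chi\circ\gamma^{v}\}$. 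The hypothesis gives $1\in H$; the group law $\gamma^{v}\circ\gamma^{w}=\gamma^{v+w}$ shows $H$ is closed under addition; and since the compositional inverse of a map commuting with $\chi$ again commutes with $\chi$, while $(\gamma^{v})^{-1}=\gamma^{-v}$ by (\ref{it2}), the set $H$ is symmetric. Finally $H$ is topologically closed, because $v\mapsto\gamma^{v}(\m{x})=\frac1v\gamma(v\m{x})$ and $\chi$ are continuous, so the commutation relation passes to limits. Hence $H$ is a closed subgroup of $(\mathbb{R},+)$ containing $1$, and the only thing left is to rule out $H=\frac1m\mathbb{Z}$.

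The crux, and the step I expect to be the genuine obstacle, is to place the compositional roots of $\gamma$ into $H$: it suffices to show $\gamma^{1/m}\in H$ for every $m$, since then $H\supseteq\mathbb{Q}$ and closedness forces $H=\mathbb{R}$. Here $\rho:=\gamma^{1/m}$ is itself a continuous solution (Proposition \ref{prop1}) with $\rho^{\circ m}=\gamma$, while $\chi\circ\rho\circ\chi^{-1}$ is a second continuous $m$-th compositional root of $\gamma$ (using $\chi\circ\gamma=\gamma\circ\chi$), and both roots commute with $\gamma$. The desired equality $\chi\circ\rho\circ\chi^{-1}=\rho$ should then follow from uniqueness of the continuous iteration root inside the one–parameter group $\{\gamma^{v}\}$, the rigidity being supplied precisely by the homogeneity $\gamma^{v}=\frac1v\gamma(v\,\cdot\,)$ and the normalisation (\ref{riba}), which pins the embedding down near the identity. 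Should a clean self–contained uniqueness argument prove awkward, the same conclusion can be reached through the classification Theorem \ref{th1}: a commuting pair of continuous solutions is, after a common homothetic conjugation, carried into a single abelian family $\phi_{\m{a},Q}$ sharing one form $Q$, on which composition obeys $\phi_{\m{a},Q}\circ\phi_{\m{b},Q}=\phi_{\m{a}+\m{b},Q}$ by (\ref{prop2}) and hence remains among the solutions. Either way, once $H=\mathbb{R}$ is established the elementwise commutation holds, the cancellation of the first paragraph applies, and $\gamma\circ\chi$ satisfies (\ref{fuo}).
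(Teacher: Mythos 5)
Your overall strategy matches the paper's in outline: reduce via (\ref{trans}) to showing that $\gamma^{z_{1}}$ and $\chi^{z_{2}}$ commute for all $z_{1},z_{2}$, establish this first for a dense set of exponents, and finish by continuity. But the step you yourself flag as ``the genuine obstacle'' --- placing $\gamma^{1/m}$ in $H$ --- is left unproved, and neither of the two routes you sketch for it works as stated. The appeal to ``uniqueness of the continuous iteration root inside the one--parameter group $\{\gamma^{v}\}$'' presupposes that $\chi\circ\gamma^{1/m}\circ\chi^{-1}$ lies in that one--parameter group, which is not established: $\chi$ is not a homothetic function, so Proposition \ref{prop1} does not apply to conjugation by it, and there is no a priori reason the conjugate is of the form $\gamma^{v}$; moreover, uniqueness of continuous iterative roots fails in general, so genuine rigidity input would be required. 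The fallback through Theorem \ref{th1} is also problematic: the classification is proved only for $k\le 2$, whereas the proposition concerns arbitrary continuous solutions, and even granting it you would still need to show that two \emph{commuting} solutions are conjugate by a \emph{common} homothetic $\ell$ into a \emph{common} family $\phi_{\cdot,Q}$ --- itself a nontrivial claim.

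The missing ingredient is identity (\ref{it2}), $\frac{1}{n}\phi(n\m{x})=\underbrace{\phi\circ\cdots\circ\phi}_{n}(\m{x})$, which makes the fractional iterate completely explicit and removes any need for a uniqueness argument. Concretely,
\begin{eqnarray*}
\gamma^{1/m}\circ\chi(\m{x})&=&m\cdot\gamma\Big{(}\frac{1}{m}\chi(\m{x})\Big{)}
=m\cdot\gamma\circ\underbrace{\chi\circ\cdots\circ\chi}_{m}\Big{(}\frac{\m{x}}{m}\Big{)}\\
&=&m\cdot\underbrace{\chi\circ\cdots\circ\chi}_{m}\circ\gamma\Big{(}\frac{\m{x}}{m}\Big{)}
=\chi\Big{(}m\cdot\gamma\Big{(}\frac{\m{x}}{m}\Big{)}\Big{)}=\chi\circ\gamma^{1/m}(\m{x}),
\end{eqnarray*}
where the middle equality uses only the hypothesis $\gamma\circ\chi=\chi\circ\gamma$ applied to integer iterates. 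This is exactly how the paper proceeds (it treats a general rational exponent $\frac{m}{n}$ in one computation and uses the second identity of (\ref{it2}) for negative rationals), after which your closed-subgroup and continuity argument closes the proof just as the paper's density argument does. With this step supplied, your proof would be complete and essentially the paper's.
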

\begin{proof} First, we will show that in this case $\gamma$ and $\chi^{z}$ commute too. Let $z=\frac{m}{n}$ be a positive rational number.
Then
\begin{eqnarray*}
\chi^{\frac{m}{n}}\circ\gamma(\m{x})&=&\frac{n}{m}\chi\Big{(}\frac{m}{n}\gamma(\m{x})\Big{)}\mathop{=}^{(\ref{it2})}
\frac{n}{m}\chi\Big{(}m\underbrace{\gamma\circ\cdots\circ\gamma}\limits_{n}(\frac{\m{x}}{n})\Big{)}\\
&=&n\underbrace{\chi\circ\cdots\circ\chi}\limits_{m}\circ\underbrace{\gamma\circ\cdots\circ\gamma}\limits_{n}(\frac{\m{x}}{n})=
n\underbrace{\gamma\circ\cdots\circ\gamma}\limits_{n}\circ\underbrace{\chi\circ\cdots\circ\chi}\limits_{m}(\frac{\m{x}}{n})\\
&=&\gamma\Big{(}\frac{n}{m}\chi(\frac{m}{n}\m{x})\Big{)}=\gamma\circ\chi^{\frac{m}{n}}(\m{x}).
\end{eqnarray*}
Analogously, the second identity of (\ref{it2}) shows that $\gamma$ and $\chi^{\frac{m}{n}}$ commute for negative rational $\frac{m}{n}$. Since, according to Proposition \ref{prop1},
$\chi^{\frac{m}{n}}$ is also a solution to (\ref{funk}), we see that
$\gamma^{z_{1}}$ and $\chi^{z_{2}}$ commute for all pairs of rational numbers $z_{1}$, $z_{2}$. Due to continuity, they commute for all $z_{1},z_{2}\in\mathbb{R}$. Finally, this gives
\begin{eqnarray*}
(\gamma\circ\chi)^{z_{1}}\circ(\gamma\circ\chi)^{z_{2}}\mathop{=}^{(\ref{trans})}
\gamma^{z_{1}}\circ\chi^{z_{1}}\circ\gamma^{z_{2}}\circ\chi^{z_{2}}
=\gamma^{z_{1}}\circ\gamma^{z_{2}}\circ\chi^{z_{1}}\circ\chi^{z_{2}}=\gamma^{z_{1}+z_{2}}\circ\chi^{z_{1}+z_{2}}
\mathop{=}^{(\ref{trans})}(\gamma\circ\chi)^{z_{1}+z_{2}}.
\end{eqnarray*}
This proves the statement.\end{proof}

Suppose, as before, that $\phi$ satisfies the hypotheses of the Theorem. Let us define
\begin{eqnarray*}
 \mathscr{V}(\m{x})=\Big{\{}\frac{1}{z}\cdot\phi(\m{x}z):z\in\widehat{\mathbb{R}}\Big{\}}.
\end{eqnarray*}
Then from the functional equation and other properties we inherit that these sets for different $\m{x}$ either coincide, or have only a single common point $\m{x}=\m{0}$. Indeed, if
\begin{eqnarray*}
\frac{1}{z_{1}}\cdot\phi(\m{x}_{1}z_{1})=\frac{1}{z_{2}}\cdot\phi(\m{x}_{2}z_{2})\Rightarrow
\m{x}_{1}=\frac{1}{z_{2}-z_{1}}\cdot\phi(\m{x}_{2}(z_{2}-z_{1})).
\end{eqnarray*}
Thus,
\begin{eqnarray*}
\mathscr{V}(\m{x})=\mathscr{V}(\phi(\m{x}));\quad \phi(\m{x})\in\mathscr{V}(\m{x});\quad\m{x}\in\mathscr{V}(\m{x});\quad \m{0}\in\mathscr{V}(\m{x}).
\end{eqnarray*}
There is a continuous bijection
\begin{eqnarray*}
T:\mathscr{V}(\m{x})\rightarrow\widehat{\mathbb{R}},\quad \frac{1}{z}\cdot\phi(\m{x}z)\mapsto z.
\end{eqnarray*}
In particular,
\begin{eqnarray*}
T(\m{x})=0;\quad T(\phi(\m{x}))=1;\quad T(\m{0})=\infty;\quad T\circ\phi\circ T^{-1}(z)=z+1.
\end{eqnarray*}
Let us call $\mathscr{V}(\m{x})$ {\it the orbit of} $\m{x}$. Let us call a set $\mathscr{C}$ {\it a representation set for} $\phi$, if it is homeomorphic to a sphere $\mathbb{S}^{k-1}$, it contains the origin, and a single non-zero representative from each other orbit. We need the following
\begin{prop}
Let $\phi$ be a solution to (\ref{funk}), $\m{a}$ is finite. Then $\m{x}$ and $\phi(\m{x})$ are not collinear, provided that $\m{x}$ is not a multiple of $\m{a}$.
\label{prop9}
\end{prop}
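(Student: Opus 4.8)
The plan is to argue by contradiction. Assume $\m{x}_{0}\in\mathbb{R}^{k}\setminus\mathbb{R}\m{a}$ (so $\m{x}_{0}\neq\m{0}$ and is not a multiple of $\m{a}$) while $\phi(\m{x}_{0})$ is collinear with $\m{x}_{0}$, and I will force $\m{x}_{0}$ back onto $\mathbb{R}\m{a}$. First I pin down the collinearity scalar. Since $\phi$ is a bijection and (\ref{aaa}) with $z=-1$ gives $\phi(-\m{a})=\infty$, the only preimage of $\infty$ is $-\m{a}$; as $-\m{a}\notin\mathbb{R}\m{x}_{0}$, the value $\phi(\m{x}_{0})$ is finite, and it is nonzero by (\ref{nulis}). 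Hence $\phi(\m{x}_{0})=\lambda\m{x}_{0}$ for some $\lambda\in\mathbb{R}\setminus\{0\}$. The same observation shows more: for every $t\in\widehat{\mathbb{R}}\setminus\{0\}$ the point $\phi(t\m{x}_{0})$ is finite and nonzero, because $\phi(\m{y})=\m{0}$ only at $\m{y}=\m{0}$ and $\phi(\m{y})=\infty$ only at $\m{y}=-\m{a}$, and neither $\m{0}$ nor $-\m{a}$ lies on $\mathbb{R}\m{x}_{0}$ off the origin.

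Next I track the direction of $\phi$ along the line. Put $F(t)=\phi(t\m{x}_{0})$ and let $\psi(t)=[F(t)]\in\mathbb{R}P^{k-1}$ be its projective class, which by the previous paragraph is well defined and continuous for $t\neq0$. By (\ref{riba}) we have $\phi(t\m{x}_{0})=t\m{x}_{0}+o(t)$, so $\psi$ extends continuously to $t=0$ with $\psi(0)=[\m{x}_{0}]$, while $\psi(\infty)=[\phi(\infty)]=[\m{a}]$; the crucial fact is that $[\m{x}_{0}]\neq[\m{a}]$. Substituting $\phi(\m{x}_{0})=\lambda\m{x}_{0}$ into the rewritten equation (\ref{it}) yields $(z+1)F(\lambda z)=z\,F(z+1)$, so $F(\lambda z)$ and $F(z+1)$ are nonzero scalar multiples of each other and therefore $\psi(\lambda z)=\psi(z+1)$ for all $z$. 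Equivalently $\psi\circ g=\psi$ for the affine map $g(w)=w/\lambda+1$ of $\widehat{\mathbb{R}}$.

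Finally I play the dynamics of $g$ against the continuity of $\psi$. If $\lambda\neq\pm1$, then $g$ has the two fixed points $w^{*}=\lambda/(\lambda-1)$ and $\infty$ with reciprocal multipliers $1/\lambda$ and $\lambda$; as $|\lambda|\neq1$ one of them is attracting, every forward orbit converges to it, and $\psi=\psi\circ g^{\,n}$ together with continuity forces $\psi$ to be constant. The parabolic case $\lambda=1$ is $g(w)=w+1$, whose orbits escape to $\infty$, again making $\psi$ constant. In all these cases constancy gives $[\m{x}_{0}]=\psi(0)=\psi(\infty)=[\m{a}]$, the desired contradiction. The one case that this convergence argument does not reach, and which I expect to be the main obstacle, is $\lambda=-1$: there $g(w)=1-w$ is an involution and the invariance $\psi(w)=\psi(1-w)$ does \emph{not} force $\psi$ constant. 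For this case I instead substitute $z=-\tfrac12$ directly into $(z+1)F(\lambda z)=z\,F(z+1)$; both sides reduce to a multiple of $\m{v}:=\phi(\tfrac12\m{x}_{0})$ and the identity collapses to $\m{v}=-\m{v}$ in $\widehat{\mathbb{R}^{k}}$, so $\m{v}\in\{\m{0},\infty\}$. Since $\m{v}\neq\m{0}$ by (\ref{nulis}), we get $\phi(\tfrac12\m{x}_{0})=\infty$, whence $\tfrac12\m{x}_{0}=-\m{a}$ and $\m{x}_{0}\in\mathbb{R}\m{a}$, the final contradiction.
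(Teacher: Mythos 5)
Your argument is correct, but it follows a genuinely different route from the paper's. The paper settles Proposition \ref{prop9} by a two-line substitution: if $\m{y}=\phi(\m{y}z)\frac{1-z}{z}$, then (\ref{funk}) collapses to $(1-z)\phi(\m{y})=\phi(\m{y})$, which for $z\neq 0$ forces $\phi(\m{y})\in\{\m{0},\infty\}$ and hence $\m{y}=-\m{a}$; replacing $\m{y}$ by $\m{x}/z$ then shows that $\m{x}=(1-z)\phi(\m{x})$ can hold for some $z\neq0$ only when $\m{x}$ is collinear with $\m{a}$. You instead projectivize the whole line $\mathbb{R}\m{x}_{0}$, extract from (\ref{it}) the invariance $\psi=\psi\circ g$ under the affine map $g(w)=w/\lambda+1$, and play the fixed-point dynamics of $g$ against the continuity of $\psi$, with the involutive case $\lambda=-1$ killed by the special value $z=-\tfrac12$. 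Both proofs rest on the same prior facts ((\ref{nulis}), (\ref{aaa}), bijectivity, (\ref{riba})), and both are sound. What your longer route buys is completeness at the degenerate scalar: the paper's substitution amounts to taking $z=1-\mu$ where $\m{x}=\mu\phi(\m{x})$, which silently requires $\mu\neq1$, so a hypothetical fixed point $\phi(\m{x}_{0})=\m{x}_{0}$ off $\mathbb{R}\m{a}$ is not literally excluded there, whereas your parabolic case $\lambda=1$ handles it explicitly; the price is that you lean on continuity through limits of iterates, while the paper's argument is a single algebraic manipulation of the functional equation. One cosmetic point: when $|\lambda|<1$ the orbit of the repelling fixed point $w^{*}=\lambda/(\lambda-1)$ does not converge to the attracting one, but since $\psi$ is constant on the dense complement and continuous, constancy still follows (and in fact you only need $\psi(0)=\psi(\infty)$ with $0\neq w^{*}$), so nothing is lost.
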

\begin{proof}As is clear from (\ref{funk}), the equation
\begin{eqnarray*}
\m{x}=\phi(\m{x}z) \frac{1-z}{z}
\end{eqnarray*}
can have a solution only if $\phi(\m{x})=\infty$; that is, $\m{x}=-\m{a}$. Replacing $\m{x}$ with $\m{x}/z$, we see that $\m{x}=\phi(\m{x})(1-z)$ has a solution only iff $\m{x}$ is collinear with $\m{a}$.\end{proof}

Thus, for a given $\m{x}$ not parallel with $\m{a}$, any line $z\m{x}$, $z\in\mathbb{R}$, cannot contain two different non-zero points belonging to the same orbit. Indeed, assume the opposite. Then these points are $\m{x}_{0}$ and $\frac{1}{z}\phi(\m{x}_{0}z)$. By the assumption, they are collinear; that is, $y\m{x}_{0}=\frac{1}{z}\phi(\m{x}_{0}z)$. But this gives $y(\m{x}_{0}z)=\phi(\m{x}_{0}z)$, which contradicts Proposition \ref{prop9}.\\

The final ingredient into the proof of the Theorem is a resolution of the following
\begin{quest}
Let a continuous function $\phi$ satisfies (\ref{funk}), $\m{a}$ is finite. We ask whether there exists a representation set for $\phi$, call it $\mathscr{C}$, which is invariant under homothety: $\mathscr{C}=z\mathscr{C}$, $z\in\mathbb{R}\setminus\{0\}$.
\label{que}
\end{quest}
\indent{\it Remark 4. }The case $k=1$ is trivial (all vectors are multiples of $\m{a}$). Further, if $k=2$, then, according to the above observation, every line through the origin, which is not identical to $\mathbb{R}\m{a}$, intersects each orbit in not more than a single point. Since $\phi^{z}(\m{x})=(\m{a}+o(1))/z$ as $z\rightarrow\pm\infty$, each such line is a representation set. Here the origin represents itself and the point at infinity represents the orbit of $\m{a}$. Unfortunately, our efforts to answer affirmatively and rigorously this question for $k\geq 3$ have failed. Nevertheless, for a particular $\phi$ a representation set can be constructed explicitly. For example, we have
\begin{prop} A compactification of the hyperplane
\begin{eqnarray*}
\mathscr{C}=\{\m{x}=(x_{1},x_{2},\ldots,x_{k}):\sum\limits_{i=1}^{k}x_{i}=0\}
\end{eqnarray*}
is a representation set for $\phi_{1}$ (as given by (\ref{basic})).
\end{prop}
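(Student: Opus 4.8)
The plan is to realize $\phi_1$ as a member of the family $\phi_{\m{a},Q}$ and then, orbit by orbit, intersect the orbit explicitly with the compactified hyperplane. First I would observe that $\phi_1=\phi_{\m{e},Q}$, where $\m{e}=(1,\ldots,1)$ and $Q(\m{x})=\frac1k\sum_{i=1}^k x_i^2$ (so indeed $Q(\m{e})=1$); the associated bilinear form is $B(\m{x},\m{y})=\frac2k\sum_i x_iy_i$, whence $B(\m{x},\m{e})=\frac2k\sum_i x_i$. Writing $s=\sum_i x_i$ and invoking (\ref{prop3}), the orbit of a point $\m{x}\neq\m{0}$ is $\mathscr{V}(\m{x})=\{\phi_{\m{e}z,Q}(\m{x}):z\in\widehat{\mathbb{R}}\}$, and by (\ref{alter}) its $j$-th coordinate is $(\phi_{\m{e}z,Q}(\m{x}))_j=(zQ(\m{x})+x_j)/D(z)$ with $D(z)=z^2Q(\m{x})+\tfrac{2s}{k}z+1$.

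The two topological requirements are immediate: $\mathscr{C}$ is a $(k-1)$-dimensional linear subspace, hence homeomorphic to $\mathbb{R}^{k-1}$, so its one-point compactification $\widehat{\mathscr{C}}=\mathscr{C}\cup\{\infty\}$ is homeomorphic to $\mathbb{S}^{k-1}$ and of course contains the origin. The heart of the matter is the representative count. Summing the coordinates above, a point of $\mathscr{V}(\m{x})$ lies on the affine hyperplane $\sum_i y_i=0$ precisely when the numerator $N(z):=kzQ(\m{x})+s$ vanishes while $D(z)\neq0$. Since $Q$ is positive definite we have $Q(\m{x})>0$ for $\m{x}\neq\m{0}$, so $N(z)=0$ has the single finite root $z^{*}=-s/(kQ(\m{x}))$; this is the candidate for the unique non-zero representative.

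I would then split according to whether $\m{x}$ is parallel to $\m{a}=\m{e}$. If $\m{x}$ is not a multiple of $\m{e}$, then $\m{e}zQ(\m{x})+\m{x}\neq\m{0}$ for every $z$, so (\ref{tarp}) forces $D(z)>0$ throughout; in particular $D(z^{*})>0$, so $z^{*}$ produces a genuine finite point of $\mathscr{V}(\m{x})$ lying in $\mathscr{C}$, and since $D$ never vanishes this orbit never reaches $\infty$. Hence such an orbit meets $\widehat{\mathscr{C}}$ in exactly the origin together with the single non-zero point at $z^{*}$. If instead $\m{x}$ is a multiple of $\m{e}$, we are looking at the orbit of $\m{a}$, which by (\ref{aaa}) is the whole line $\mathbb{R}\m{e}\cup\{\infty\}$; this line meets $\mathscr{C}$ only at $\m{0}$, because $\sum_i e_i=k\neq0$, while $\infty\in\widehat{\mathscr{C}}$, so here the unique non-zero representative is the adjoined point $\infty$.

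Finally I would assemble these facts: every non-trivial orbit contributes exactly one non-zero point of $\widehat{\mathscr{C}}$, and conversely each non-zero point of $\widehat{\mathscr{C}}$ belongs to a single orbit and is that orbit's sole representative, so the correspondence between non-trivial orbits and non-zero points of $\widehat{\mathscr{C}}$ is a bijection — exactly the three defining properties of a representation set. The main obstacle is the degenerate boundary case: taking the representative $\m{x}=\m{e}$ one finds $D(z)=(z+1)^2$, so the root $z^{*}=-1$ of $N$ is simultaneously a root of $D$, and the naive ``numerator $=0$'' recipe collapses to $0/0$; it must be reinterpreted as the point $\infty$. Keeping careful track of how the single compactification point absorbs precisely the orbit of $\m{a}$, and of the strict positivity of $D$ furnished by (\ref{tarp}) off that orbit, is the delicate part of the argument.
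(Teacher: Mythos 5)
Your proposal is correct, but it runs in the opposite direction from the paper's proof and uses different machinery, so a comparison is worthwhile. The paper works directly with the formula (\ref{basic}): it fixes an arbitrary target point $\m{y}$ (neither $\m{0}$ nor proportional to $\m{e}$), writes down the system $y_{j}=\bigl(z\sum_i x_i^2+kx_j\bigr)/\sum_i(zx_i+1)^2$ together with $\sum_i x_i=0$, and eliminates by hand (summing over $j$, expressing $X=\sum x_i^2$ through $Y=\sum y_j$ and $z$, then recovering the $x_j$ and finally $z$ from a scalar equation) to conclude that the solution $(z,\m{x})$ is unique, i.e.\ that every point lies on the orbit of exactly one element of $\mathscr{C}$. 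You instead fix the orbit: you identify $\phi_{1}=\phi_{\m{e},Q}$ with $Q=\frac1k\sum x_i^2$, parametrize the orbit as $\{\phi_{\m{e}z,Q}(\m{x})\}$ via (\ref{prop3}) and (\ref{alter}), and intersect it with the hyperplane, which collapses to the single linear equation $kzQ(\m{x})+s=0$; the positivity statement (\ref{tarp}) then shows the denominator $D(z)$ never vanishes when $\m{x}\notin\mathbb{R}\m{e}$, so such an orbit avoids $\infty$ and the non-zero representative is genuinely unique. Your route buys a much lighter computation and makes explicit two points the paper leaves tacit: that $\infty$ belongs only to the orbit of $\m{e}$, and how the $0/0$ degeneration at $\m{x}=\m{e}$, $z=-1$ is absorbed by the compactification point. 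What the paper's elimination buys is self-containedness at the level of (\ref{basic}), with no appeal to the family $\phi_{\m{a},Q}$; note also that your count of intersection points tacitly uses the injectivity of $z\mapsto\frac1z\phi(\m{x}z)$ on an orbit, which is legitimate since the paper establishes it via the bijection $T$. The only micro-gap to patch is the remark that $\mathscr{V}(w\m{e})=\mathscr{V}(\m{e})$ for all $w\neq 0$ (immediate from (\ref{aaa}), or from the fact that two orbits sharing a non-zero point coincide), so that ``$\m{x}$ a multiple of $\m{e}$'' really does reduce to the single orbit represented by $\infty$.
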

\begin{proof}The point $\m{0}$ represents its own orbit. If $\m{y}$ is proportional to the vector $(1,1,\ldots,1)$, the representative of its orbit is the point at infinity. Suppose, $\m{y}=(y_{1},y_{2},\ldots,y_{k})\neq\m{0}$, and not all $y_{j}$ are equal.
And so, we need to show that the system of equations
\begin{eqnarray*}
\left\{\begin{array}{c@{\qquad}l}\displaystyle y_{j}=\frac{z\sum\limits_{i=1}^{k}x^{2}_{i}+k\cdot x_{j}}
{\sum\limits_{i=1}^{k}(zx_{i}+1)^2} & j=1,2,\ldots,k;
\\ \sum\limits_{i=1}^{k}x_{i}=0; \end{array}\right.
\end{eqnarray*}
has a unique solution $\{z,x_{1},x_{2},\ldots,x_{k}\}$. Indeed, let $\sum_{i=1}^{k}x_{i}^{2}=X$, $\sum_{j=1}^{k}y_{j}=Y$. If $Y=0$, the solution is
given by $z=0$, $y_{j}=x_{j}$. Suppose $Y\neq0$. Then $z\neq0$. Thus,
\begin{eqnarray}
z^{2}y_{j}X=zX+kx_{j}-ky_{j},\quad j=1,2,\ldots,k.
\label{ygrek}
\end{eqnarray}
Summing this over $j=1,2,\ldots,k$, we obtain
\begin{eqnarray*}
z^{2}XY=kzX-kY\Rightarrow X=\frac{kY}{kz-z^{2}Y}.
\end{eqnarray*}
Therefore, (\ref{ygrek}) can be rewritten as
\begin{eqnarray*}
x_{j}=y_{j}-Y\cdot\frac{1-zy_{j}}{k-zY}.
\end{eqnarray*}
The only indeterminate left is $z$, and it is found from the equation $\sum_{j=1}^{k}x^{2}_{j}=X$. In other words,
\begin{eqnarray*}
\sum\limits_{j=1}^{k}\Big{(}y_{j}-Y\cdot\frac{1-zy_{j}}{k-zY}\Big{)}^2=\frac{kY}{kz-z^{2}Y}\Rightarrow
\sum\limits_{j=1}^{k}(ky_{j}-Y)^{2}=kY\cdot\frac{k-zY}{z}.
\end{eqnarray*}
Obviously, this equation has a unique solution in $z$ if $y_{j}\neq\frac{Y}{k}$ for at least one $j$, which by our assumption does hold (of course, there exists a unique solution in case all $y_{j}$ are equal, but then $z=\frac{k}{Y}$, and our reasoning is invalid). \end{proof}

\begin{proof}{\it (The Theorem).} Suppose $k=2$, $\phi$ is a non-zero continuous solution to (\ref{funk}) with $\m{a}$ finite. Fix any known non-zero continuous solution of (\ref{funk}) whose exceptional vector is also finite, call this solution $\chi$ (let us take, for example, $\chi=\phi_{1}$ as is given by (\ref{basic})). Choose two sets $\mathscr{C}_{\phi}$ and $\mathscr{C}_{\chi}$, which are invariant under homothety and which are representation sets for $\phi$ and $\chi$ respectively. According to remark after the Question \ref{que}, this can be done. Since they are both homeomorphic to a sphere $\mathbb{S}^{k-1}$, let
$\Delta:\mathscr{C}_{\chi}\mapsto\mathscr{C}_{\phi}$ be a homothetic homeomorphism. This, of course, implies $\Delta(\m{0})=\m{0}$.
Let us define the following function:
\begin{eqnarray*}
\ell\Big{(}\frac{1}{z}\cdot\chi(\m{x}z)\Big{)}=\frac{1}{z}\cdot\phi(\Delta(\m{x})z),\quad \m{x}\in\mathscr{C}_{\chi},\quad z\in\widehat{\mathbb{R}}.
\end{eqnarray*}
(As an aside, taking the limit the limit $z\rightarrow 0$ gives $\ell(\m{x})=\Delta(\m{x})$ for $\m{x}\in\mathscr{C}_{\chi}$).
By the very construction, the function $\ell$ is well-defined on the whole $\widehat{\mathbb{R}^{k}}$. Second, this is obviously a continuous function. By the very definition of $\mathscr{C}_{\chi}$ and $\mathscr{C}_{\phi}$, it is a bijection. Moreover, it is a homothetic function.  Indeed, for $y\in\mathbb{R}$, $z\in\mathbb{R}$, $\m{x}\in\mathscr{C}_{\chi}$, we have:
\begin{eqnarray*}
\ell\Big{(}\frac{y}{z}\cdot\chi(\m{x}z)\Big{)}=\ell\Big{(}\frac{y}{z}\cdot\chi\Big{(}\m{x}y\cdot\frac{z}{y}\Big{)}\Big{)}=
\frac{y}{z}\phi\Big{(}\Delta(\m{x}y)\cdot\frac{z}{y}\Big{)}=\frac{y}{z}\phi(\Delta(\m{x})z)=
y\ell\Big{(}\frac{1}{z}\cdot\chi(\m{x}z)\Big{)}.
\end{eqnarray*}
(We have used a fact that for $\m{x}\in\mathscr{C}_{\chi}$, $\m{x}y\in\mathscr{C}_{\chi}$). Finally, let $\m{y}=\frac{1}{z}\cdot\chi(\m{x}z)$. Then
\begin{eqnarray*}
\ell^{-1}\circ\phi\circ\ell(\m{y})&=&\ell^{-1}\circ\phi\Big{[}\frac{1}{z}\cdot\phi(\Delta(\m{x})z)\Big{]}
\mathop{=}^{(\ref{fuo})}\ell^{-1}\Big{[}\frac{1}{z+1}\cdot\phi\Big{(}\Delta(\m{x})(z+1)\Big{)}\Big{]}\\
&=&\frac{1}{z+1}\cdot\chi\Big{(}\m{x}(z+1)\Big{)}=\chi(\m{y}).
\end{eqnarray*}
This shows that any solution of (\ref{funk}) can be obtained by a conjugation of another solution with a homothetic function, and this finishes the first half of the Theorem. As already mentioned, the presented proof is valid for finite $\m{a}$. The proof, which covers the fourth case of the Theorem (in the alternative formulation), is completely analogous. \end{proof}

\section{$1$-dimensional case}
We finish with a demonstration that, provided we limit ourselves to a positive half-line, we can give the full solution in $1$-dimensional case.
\begin{prop}
Let $\mathbb{R}_{+}=(0,\infty)$. All functions $f:\mathbb{R}_{+}\mapsto\mathbb{R}_{+}$, which
satisfy the functional equation (\ref{funkk}), are given by
$f(x)=\frac{x}{Cx+1}$, where $C\geq 0$ is a fixed constant.
\label{prop10}
\end{prop}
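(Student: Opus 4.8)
The plan is to boil (\ref{funkk}) down to one multiplicative functional equation for the single function $\beta(x):=f(x)/x$, and then to squeeze out the answer using a monotonicity that the equation forces \emph{by itself}, so that no continuity or measurability of $f$ is ever invoked. First I would symmetrize the equation: substituting $x\mapsto x/z$ in (\ref{funkk}) and writing $t=(1-z)/z$, which ranges over $(0,\infty)$ as $z$ ranges over $(0,1)$, turns it into
\begin{eqnarray*}
f\big(t\,f(x)\big)=\frac{t}{1+t}\,f\big((1+t)x\big),\qquad x,t>0.
\end{eqnarray*}
Dividing by $t\,f(x)$, using $f=x\beta$, and relabelling $a=x$, $b=tx$ collapses this to
\begin{eqnarray*}
\beta(a+b)=\beta(a)\,\beta\big(b\,\beta(a)\big),\qquad a,b>0,\qquad \beta>0.
\end{eqnarray*}
This is the one–variable shadow of the semigroup law (\ref{fuo}); it now carries the entire content of the problem, and the target solution corresponds exactly to $\beta(x)=1/(Cx+1)$.

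The decisive structural lemma is that $f(x)\le x$ for every $x>0$ (equivalently $\beta\le 1$). I would prove it by a short rigidity argument: if $f(x_0)>x_0$ for some $x_0$, then choosing $t=x_0/(f(x_0)-x_0)>0$ makes the two arguments coincide, $(1+t)x_0=t\,f(x_0)$, so the displayed form of (\ref{funkk}) reads $f(w)=\frac{t}{1+t}f(w)$ at that common value $w$, forcing $t/(1+t)=1$, a contradiction. With $\beta\le1$ in hand, the $\beta$-equation gives $\beta(a+b)=\beta(a)\beta(b\beta(a))\le\beta(a)$, so $\beta$ is \emph{non-increasing} for free. A non-increasing $\beta$ that fails to be injective must be constant on some interval, and then the $\beta$-equation propagates this to a half-line and finally to all of $(0,\infty)$, yielding $\beta\equiv1$, i.e. $f=\mathrm{id}$ (the case $C=0$). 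So I may assume $\beta$ is strictly decreasing, hence injective.

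To finish I pass to $\rho:=1/\beta$, which is strictly increasing with $\rho>1$, and which satisfies $\rho(a+b)=\rho(a)\,\rho\big(b/\rho(a)\big)$. Let $A:=\rho^{-1}$ on the image of $\rho$. The clean trick is this: for $r,\sigma$ in the image, write $r=\rho(a)$ and take $b=r\,A(\sigma)$, so that $b/\rho(a)=A(\sigma)$ and the equation becomes $\rho(a+b)=r\sigma$; applying $A$ gives the cocycle identity
\begin{eqnarray*}
A(r\sigma)=A(r)+r\,A(\sigma).
\end{eqnarray*}
Interchanging $r$ and $\sigma$ and subtracting kills the twist entirely: $A(r)(\sigma-1)=A(\sigma)(r-1)$, so $A(r)/(r-1)$ is a constant $1/C$, with $C>0$ since $A>0$. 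Thus $A(r)=(r-1)/C$, and evaluating at $r=\rho(x)$ gives $\rho(x)=Cx+1$, hence $\beta(x)=1/(Cx+1)$ and $f(x)=x/(Cx+1)$; together with the $C=0$ case this is the claimed list, and the converse is a direct substitution. I expect the main obstacle to be conceptual rather than computational: making sure that regularity is \emph{never} smuggled in. Concretely, the delicate points are establishing $f(x)\le x$ (the source of all monotonicity) and ruling out the pathological non-injective $\beta$'s — both of which, fortunately, reduce to purely algebraic consequences of the equation; once $\beta$ is known to be strictly monotone, the symmetry identity for $A$ produces the exact form without any appeal to continuity, differentiability, or measurability.
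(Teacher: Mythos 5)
Your proposal is correct and follows essentially the same route as the paper: the collision argument forcing $f(x)\le x$, the resulting monotonicity of $f(x)/x$, the separate treatment of the degenerate case, and the final symmetrization-plus-injectivity step are all the paper's steps, repackaged through $\beta=f(x)/x$, $\rho=1/\beta$ and $A=\rho^{-1}$ (your identity $A(r\sigma)=A(r)+rA(\sigma)$ and its symmetrization are exactly the paper's $x=uw/f(u)+u$, $y=uw/f(w)+w$, $x=y$). The only point to spell out is the propagation in the non-injective case: from $\beta\equiv c$ on $[a_1,a_2]$ the $\beta$-equation gives $\beta\equiv 1$ on some $(0,\varepsilon]$, and then the relation $\beta(a+b)=\beta(b)$ valid whenever $\beta(a)=1$ doubles that interval indefinitely, yielding $\beta\equiv 1$.
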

\begin{proof} If $z=\frac{u}{x}$, $u<x$,
the functional equation can be rewritten as
\begin{eqnarray}
\Big{(}1-\frac{u}{x}\Big{)}f(x)=f\Big{(}f(u)\frac{x-u}{u}\Big{)}.
\label{second}
\end{eqnarray}
Suppose for some $u$ we have $f(u)>u$. Therefore, $f(u)=Lu$ for
some real $L>1$. Then the equation in $x$, $f(u)\frac{x-u}{u}=x$,
that is, $L(x-u)=x$, has a positive real solution
$x_{0}=\frac{Lu}{L-1}>u$, and then
$(1-\frac{u}{x_{0}})f(x_{0})=f(x_{0})$; thus, since $f(x_{0})>0$,
we obtain a contradiction. Therefore, $f(x)\leq x$ for all
positive $x$. Then the initial equation gives
\begin{eqnarray}
(1-z)f(x)=f\Big{(}f(xz)\frac{1-z}{z}\Big{)}\leq
f(xz)\frac{1-z}{z};\label{nel}
\end{eqnarray}
therefore, $\frac{f(x)}{x}\leq\frac{f(xz)}{xz}$ for $0<z<1$, and
this implies that $\frac{f(x)}{x}\leq 1$ is monotonically decreasing
(in a not strict sense). Suppose, for some $u_{0}$ we have an
equality $f(u_{0})=u_{0}$. Then $f(x)=x$ for all $x\leq u_{0}$.
Let in the initial equation $z\in[\frac{1}{3},\frac{2}{3}]$, and
$x\leq \frac{3}{2}u_{0}$. Then $xz\leq u_{0}$, and hence
$f(xz)=xz$. Further, $x(1-z)\leq u_{0}$ as well, and in the same
fashion $f(x(1-z))=x(1-z)$. Therefore, $(1-z)f(x)=(1-z)x$, for all
$x$, $x\leq \frac{3}{2}u_{0}$. By induction, $f(x)=x$ for all
positive $x$, $x\leq (\frac{3}{2})^{N}u_{0}$, $N\in\mathbb{N}$,
and consequently, $f(x)=x$ for all real $x>0$. This function
satisfies the functional equation. Suppose, $f(x)<x$ for all real
$x>0$. Returning to the initial equation, we see that in
($\ref{nel}$) the last inequality is strict, and in this case
$\frac{f(x)}{x}$ is strictly decreasing. Now fix two arbitrary positive $u$
and $w$. In the second form of the equation, that is (\ref{second}), let us find $x$, for
which $f(u)\frac{x-u}{u}=w$. Thus, $x=\frac{uw}{f(u)}+u$, and
since $\frac{f(x)}{x}(x-u)=f(w)$, we obtain
\begin{eqnarray*}
\frac{f(x)}{x}=\frac{f(u)}{u}\cdot\frac{f(w)}{w}.
\end{eqnarray*}
From the symmetry, if $y=\frac{uw}{f(w)}+w$, we get
$\frac{f(y)}{y}=\frac{f(u)}{u}\cdot\frac{f(w)}{w}$, and this gives
$\frac{f(x)}{x}=\frac{f(y)}{y}$. Since the last function is
strictly decreasing, this can happen only iff $x=y$.
Therefore,
\begin{eqnarray*}
\frac{uw}{f(u)}+u=\frac{uw}{f(w)}+w, \quad u,w>0.
\end{eqnarray*}
This implies that $\frac{1}{f(u)}-\frac{1}{u}$ is independent of
$u$, and thus is equal to $C$, $C>0$ is a positive constant.
Ultimately, $f(x)=\frac{x}{Cx+1}$, $C>0$. The function $f(x)=x$ is
a special case of the last with $C=0$. The check shows that these
functions in fact satisfy the functional
equation. \end{proof}
If we know {\it a priori} that the solution of this functional equation is continuous, and, moreover,
$f(x)\rightarrow A>0$, as $x\rightarrow\infty$, we may consider the functional equation as $x\rightarrow\infty$. This gives
\begin{eqnarray*}
(1-z)A=f\Big{(}A(1-z)/z\Big{)},
\end{eqnarray*}
and the answer follows. Surprisingly, it appears that to solve this functional equation we do not need any
assumption on $f$. The presented solution only assumes that $f$ is a set-theoretic function $f:\mathbb{R}_{+}\mapsto\mathbb{R}_{+}$.\\

\par\bigskip

\noindent

\noindent {\sc Giedrius Alkauskas}, Institute of Mathematics, Department of Integrative Biology,
Universit\"{a}t f\"{u}r Bodenkultur Wien, Gregor Mendel-Stra{\ss}e 33, A-1180 Wien, Austria, \&\\
Vilnius University, Department of Mathematics and Informatics, Naugarduko 24, LT-03225 Vilnius, Lithuania.\\
{\tt giedrius.alkauskas@gmail.com}\\

\smallskip

\begin{thebibliography}{9}


\bibitem{bj} {\sc K. Baron, W. Jarczyk}, On continuity of solutions to some equations of iteration theory, {\it Aequationes Math.}
{\bf 69}  (2005), no. 1-2, 28-31.

\bibitem{bcj}{\sc K. Baron, W. Chojnacki, W. Jarczyk}, Continuity of solutions of the translation equation, {\it Aequationes Math.}  {\bf 74}  (2007), no. 3, 314-317.

\bibitem{berg}{\sc L. Berg}, The local structure of the solutions of the multidimensional translation equation, {\it Aequationes Math.}  {\bf 46}  (1993), no. 1-2, 164-173.

\bibitem{fr}{\sc H. Fripertinger, L. Reich}, The formal translation equation and formal cocycle equations for iteration groups of type I, {\it Aequationes Math.}  {\bf 76}  (2008), no. 1-2, 54-91.

\bibitem{mach}{\sc A. Mach, Z. Moszner}, On stability of the translation equation in some classes of
functions, {\it Aequationes Math.}  {\bf 72}  (2006), no. 1-2, 191-197.

\bibitem{moszner1}{\sc Z. Moszner}, The translation equation and its application, {\it Demonstratio Math.} {\bf 6} (1973), 309-327.

\bibitem{moszner2}{\sc Z. Moszner}, General theory of the translation equation, {\it Aequationes Math.}  {\bf 50}  (1995), no. 1-2, 17-37.

\bibitem{barbara}{\sc B. Przebieracz}, The closure of the set of iterable functions, {\it Aequationes Math.}  {\bf 75}  (2008), no. 1-2, 239-250.

\end{thebibliography}
\end{document}